\documentclass{article}
\usepackage{amsmath, amssymb, amsthm, amsfonts, amsxtra, color, graphicx, hyperref, mathtools, tikz, tikz-cd, url, soul}
\usepackage{thmtools}
\usepackage{thm-restate}
\usepackage{cleveref, appendix}

\input xy
\xyoption{all}

\numberwithin{equation}{section}

\declaretheorem[name=Theorem]{thm}
\theoremstyle{plain}

\newtheorem{lemma}{Lemma}
\newtheorem{proposition}{Proposition}
\newtheorem{conjecture}{Conjecture}
\theoremstyle{definition}
\newtheorem{definition}{Definition}

\newtheorem{question}{Question}



\setlength{\textwidth}{460pt}
\setlength{\oddsidemargin}{0pt}
\setlength{\evensidemargin}{0pt}
\setlength{\topmargin}{0pt}
\setlength{\textheight}{620pt}

\def\calF{\mathcal{F}}
\def\EE{\mathbb{E}}

\renewcommand{\a}{\alpha}

\newcommand{\lmax}{\lambda_{\textup{max}}}
\newcommand{\lmin}{\lambda_{\textup{min}}}

\begin{document}

\title{\vspace*{-1.1in}Expected Chromatic Number of Random Subgraphs}
\author{Ross Berkowitz\\ \texttt{ross.berkowitz@yale.edu} \and Pat Devlin\\ \texttt{patrick.devlin@yale.edu} \and Catherine Lee\\ \texttt{catherine.lee@yale.edu} \and Henry Reichard\\ \texttt{henry.reichard@yale.edu} \and David Townley\\ \texttt{david.townley@yale.edu}\\ \\ Department of Mathematics\\Yale University, New Haven, CT}
\date{November 5, 2018}

\renewcommand{\thefootnote}{\fnsymbol{footnote}}

\footnotetext{AMS 2010 subject classification: 05C15, 05C80, 05D40, 60B20, 15A18}
\footnotetext{Key words and phrases:  chromatic number, random subgraphs, thresholds, eigenvalues}

\maketitle

\begin{abstract}
Given a graph $G$ and $p \in [0,1]$, let $G_p$ denote the random subgraph of $G$ obtained by keeping each edge independently with probability $p$.  Alon, Krivelevich, and Sudokov \cite{alon} proved $\mathbb{E} [\chi(G_p)] \geq C_p \frac{\chi(G)}{\log |V(G)|}$, and Bukh \cite{bukh} conjectured an improvement of $\mathbb{E}[\chi(G_p)] \geq C_p \frac{\chi(G)}{\log \chi(G)}$.  We prove a new spectral lower bound on $\mathbb{E}[\chi(G_p)]$, as progress towards Bukh's conjecture.  We also propose the stronger conjecture that for any fixed $p \leq 1/2$, among all graphs of fixed chromatic number, $\mathbb{E}[\chi(G_p)]$ is minimized by the complete graph.  We prove this stronger conjecture when $G$ is planar or $\chi(G) < 4$.  We also consider weaker lower bounds on $\mathbb{E}[\chi(G_p)]$ proposed in a recent paper by Shinkar ~\cite{shinkar}; we answer two open questions posed in \cite{shinkar} negatively and propose a possible refinement of one of them.
\end{abstract}

\section{Introduction}
For a graph $G$ and $p \in [0,1]$, we obtain a probability distribution $G_p$ called a \emph{random subgraph} by taking subgraphs of $G$ with each edge appearing independently with probability $p$. When $G = K_n$ is the complete graph on $n$ vertices, this is called the \emph{Erd\H{o}s--R\'enyi random graph}, denoted $G(n,p)$.  A \emph{proper coloring} of a graph is an assignment of colors to the vertices such that no two adjacent vertices are the same color. Finally, the \emph{chromatic number} $\chi(G)$ of a graph is the minimal number of colors needed to construct a proper coloring.

The chromatic number is one of the most important parameters of a graph, and many problems in computer science---e.g., register allocation, pattern matching, and scheduling problems---can be reduced to finding the chromatic number of a given graph. In the probabilistic setting, the distribution of $\chi(G_p)$ is studied in statistical mechanics, where physicists use random subgraphs to model molecular interactions, and properties of the resulting graph colorings are predictive of various macroscopic features ~\cite{huck}.

The chromatic number of the Erd\H{o}s--R\'enyi graph has been particularly well studied, and (for $p$ constant) Bollob\'as \cite{bollobas} was able to show that $\mathbb{E}[\chi(G(n,p))] \sim c_p \cdot n/\log{n} = c_p \chi(K_n)/\log{\chi(K_n)}$, where $c_p$ is a constant depending on $p$, and the notation $A \sim B$ is used to mean that $A/B$ tends to $1$ as the relevant parameter (here $n$) tends to infinity.  For general graphs, one cannot hope for such tight control over $\chi(G_p)$ only in terms of $\chi(G)$. The trivial upper bound $\chi(G_p) \leq \chi(G)$ is asymptotically best possible when $G$ is a disjoint union of many cliques, and the first general lower bound was was given by Alon, Krivelevich, and Sudakov \cite{alon}, who proved $\chi(G_{1/2}) \geq \frac{\chi(G)}{2 \log_2 |V(G)|}$ almost surely (i.e., with probability tending to $1$).  However, this ceases to be a meaningful bound when $|V(G)| \gg \chi(G)$, and Bukh ~\cite{bukh} asks whether it can be improved by eliminating the dependence on $|V(G)|$.

\begin{question}[Bukh]\label{bukh question}
    For each $p \in [0,1]$, is there a constant $c_p>0$ such that $\mathbb{E}[\chi(G_p)] > c_p \cdot \frac{\chi(G)}{\log \chi(G)}$ for all graphs $G$?
\end{question}

\noindent Thus, in light of Bollob\'as's result, Bukh asks whether $\mathbb{E}[\chi(G_p)]$ is (up to a constant) minimized by $G= K_n$. 

While this question is still unresolved, several papers have made progress towards an affirmative answer.  In addition to providing concentration results on $\chi(G_p)$, Shinkar \cite{shinkar} proved that if $|V(G)| = n$, then
\begin{equation}\label{shinkar equation}
\mathbb{E}[n/\alpha(G_p)] \geq C_p \frac{n/\alpha(G)}{\log{(n/\alpha(G))}},
\end{equation}
where $\alpha(H)$ denotes the independence number of $H$ (i.e., the maximum size of a set of vertices containing no edges), and Mohar ~\cite{mohar} proved an affirmative analog to Bukh's question when each instance of $\chi$ is replaced by the \emph{fractional chromatic number}, $\chi_f$.  Though incomparable, these results are related by the general fact that $|V(H)| / \alpha(H) \leq \chi_f (H) \leq \chi(H)$ for all $H$.  Thus, these affirmatively resolve question \ref{bukh question} for any graph for which $n/\alpha(G)$ (or somewhat more generally $\chi_f (G)$) is within a multiplicative factor of $\chi(G)$, which by \cite{bollobas} includes almost all graphs.  The only other general lower bound on $\chi(G_p)$ is a short coupling argument in \cite{shinkar} showing $\mathbb{E}[\chi(G_{1/m})] \geq \chi(G)^{1/m}$ for all positive integers $m$.

As one of the main results of this paper, we use recent developments from random matrix theory and a celebrated result of Hoffman \cite{hoff} to obtain a new spectral lower bound on $\EE[\chi(G_p)]$.  Recalling relevant definitions, for a graph $H$ its \emph{adjacency matrix} is the matrix indexed by $V(H)$ whose $(u,v)$-entry is $1$ if $u \sim v$ and $0$ otherwise.  Because this matrix is real-symmetric, all its eigenvalues are real, and we may define $\lmin(H)$ and $\lmax (H)$ to be its least and greatest eigenvalues (respectively).  We prove the following.

\begin{restatable}{thm}{spectral}\label{spectral}
There is a constant $C > 0$ such that for each $p \in (0,1)$, for any graph with maximum degree $\Delta$ and $n = |V(G)|$, we almost surely have
\[
    \frac{\lmax (G_p)}{-\lmin (G_p)} \geq \frac{\lmax (G) -(C/p) (\sqrt{\Delta} + \sqrt{\log(n)})}{- \lmin (G) + (C/p) (\sqrt{\Delta} + \sqrt{\log(n)})}.
\]
In particular, almost surely
$\chi(G_p) \geq \frac{\lmax (G)}{- \lmin (G) + (C/p) (\sqrt{\Delta} + \sqrt{\log(n)})}.$
\end{restatable}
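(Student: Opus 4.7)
The plan is to combine Hoffman's classical spectral lower bound $\chi(H) \geq 1 + \lmax(H)/(-\lmin(H))$ with a matrix-concentration estimate comparing the extreme eigenvalues of $G_p$ to those of $G$. Write $A(G_p) = p\,A(G) + E$, where $E$ is the symmetric random matrix whose entries are independent (modulo the symmetry constraint), centered, supported on $E(G)$, and bounded in absolute value by $1$; each row of $E$ has at most $\Delta$ nonzero entries. Weyl's inequality immediately gives
\[
\lmax(G_p) \geq p\lmax(G) - \|E\|_{\mathrm{op}}, \qquad -\lmin(G_p) \leq p(-\lmin(G)) + \|E\|_{\mathrm{op}},
\]
so after dividing numerator and denominator by $p$ the ratio inequality reduces to establishing that, with probability tending to $1$,
\[
\|E\|_{\mathrm{op}} \leq C\bigl(\sqrt{\Delta} + \sqrt{\log n}\bigr).
\]

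This operator-norm concentration is the heart of the proof and the place where I expect the main technical obstacle. The clean input is a modern random-matrix estimate (e.g.\ of Bandeira--Van Handel type) of the form $\EE\|E\|_{\mathrm{op}} \leq C_1\bigl(\sigma_* + \sigma_\infty \sqrt{\log n}\bigr)$, where $\sigma_*^2 = \max_i \sum_j \EE E_{ij}^2 \leq \Delta$ and $\sigma_\infty = \max |E_{ij}| \leq 1$; this yields the required $\sqrt{\Delta} + \sqrt{\log n}$ dependence at the level of expectation. I would then upgrade to a high-probability statement via Talagrand's inequality for convex $1$-Lipschitz functions of bounded independent variables, absorbing a further deviation of order $\sqrt{\log n}$ into the same error term and obtaining failure probability $n^{-K}$. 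Crude alternatives such as the matrix Bernstein inequality would give only $\sqrt{\Delta \log n}$, which would be too weak; the ``recent developments from random matrix theory'' advertised in the introduction enter precisely here.

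Given the operator-norm bound on $E$, the ratio inequality in the theorem follows by combining it with the Weyl estimates above and regrouping. For the ``in particular'' chromatic-number bound, I apply Hoffman's inequality $\chi(G_p) \geq 1 + \lmax(G_p)/(-\lmin(G_p))$ to the Weyl estimates and then use the elementary identity $1 + \frac{a-c}{b+c} = \frac{a+b}{b+c} \geq \frac{a}{b+c}$ (valid whenever $b \geq 0$) with $a = p\lmax(G)$, $b = p(-\lmin(G))$, $c = \|E\|_{\mathrm{op}}$, cancelling the common factor of $p$ to arrive at exactly the stated bound. The hypothesis $b \geq 0$ is automatic since $-\lmin(G) \geq 0$ for every graph with at least one edge (and the theorem is vacuous otherwise, or when $\lmax(G)$ is already dominated by the error term $(C/p)(\sqrt{\Delta} + \sqrt{\log n})$).
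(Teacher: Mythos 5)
Your proposal is correct and follows essentially the same route as the paper: decompose $A_{G_p} = pA_G + X$ with $X$ a centered, bounded, independent-entry symmetric matrix, bound $\Vert X \Vert$ by $C(\sqrt{\Delta}+\sqrt{\log n})$ via Bandeira--van Handel, transfer this to the extreme eigenvalues by a Weyl/Rayleigh-quotient perturbation argument, and finish with Hoffman's bound exactly as in your algebraic identity. The only cosmetic difference is that the paper invokes a tail-probability form of the Bandeira--van Handel estimate directly (failure probability $n^{-100}$), so no separate Talagrand upgrade from the expectation bound is needed.
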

\noindent The second part of the above follows from Hoffman's result that $n / \alpha(H) \geq 1 + \frac{\lmax (H)}{-\lmin (H)}$, which gives an affirmative answer to question \ref{bukh question} provided essentially that Hoffman's bound differs from $\chi(G)$ by at most a factor of $\log \chi(G)$ and that $-\lmin (G)$ is not much less than $\sqrt{\Delta} + \sqrt{\log(n)}$.  For instance, we show that there is an infinite family of graphs---namely appropriately chosen Kneser graphs---for which our spectral bound implies Bukh's bound, while none of the other general bounds on $\mathbb{E}[\chi(G_p)]$ are able to.  (Although for Kneser graphs, the behavior of $\alpha(G_p)$ and $\chi(G_p)$ is already well-understood \cite{alphaKneser4, chiKneser}.)

In addition to this spectral bound, we also propose and study the following two conjectures, which are readily seen as weaker and stronger (resp.) than an affirmative answer to question \ref{bukh question}.

\begin{conjecture}\label{power conjecture}
For each $p \in [0,1]$, there is a constant $c_p$ such that $\EE[\chi(G_p)] \geq c_p \cdot \chi(G)^p$ for all $G$.
\end{conjecture}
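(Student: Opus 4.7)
The plan is to start from Shinkar's coupling bound $\EE[\chi(G_{1/m})] \geq \chi(G)^{1/m}$---obtained by partitioning $E(G)$ uniformly at random into $m$ buckets $H_1,\dots,H_m$ (each marginally distributed as $G_{1/m}$), using $\chi(G) \leq \prod_i \chi(H_i)$ pointwise since product colorings of the $H_i$ yield a proper coloring of $G$, and applying Jensen---and extend it from the discrete sample points $p = 1/m$ to all $p \in (0,1)$. A first easy step is the standard simultaneous coupling (assign each edge $e$ an independent $U_e \sim \mathrm{Uniform}[0,1]$ and put $e \in G_p$ iff $U_e \leq p$), which shows $p \mapsto \EE[\chi(G_p)]$ is monotone nondecreasing. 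Setting $m = \lceil 1/p \rceil$ then yields
\[
    \EE[\chi(G_p)] \;\geq\; \EE[\chi(G_{1/m})] \;\geq\; \chi(G)^{1/m} \;\geq\; \chi(G)^{p/(1+p)},
\]
which already proves the conjecture with the weaker exponent $p/(1+p)$ in place of $p$.

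To promote the exponent from $p/(1+p)$ to $p$, one would need a version of Shinkar's argument using only about $1/p$ correlated copies of $G_p$ (rather than $\lceil 1/p \rceil$) to cover $G$; however, a simple union bound shows that each edge lies in at most $mp$ of the copies in expectation, forcing $m \geq 1/p$, so no straightforward improvement via an integral edge partition is possible. The natural escape is to relax the product-coloring bound fractionally, in the spirit of Mohar's proof of the fractional analogue of Bukh's question: $\chi_f$ composes more smoothly under edge unions than $\chi$ does, and a fractional or weighted variant of Shinkar's argument might yield the $p$ exponent at the cost of a constant depending on $p$ that can be absorbed into $c_p$.

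The main obstacle is that closing the gap from $p/(1+p)$ to $p$ amounts essentially to a log-concavity-type property of $\EE[\chi(G_p)]$ in $p$ that can fail: direct calculation for $G = K_3$ gives $\EE[\chi(G_p)] = 1 + 3p - 3p^2 + 2p^3$, whose logarithm is concave near $p=0$ but convex near $p=1$, so no purely analytic interpolation between the Shinkar sample points $p = 1/m$ can succeed. A successful approach will therefore need additional structural input, plausibly by combining the coupling argument with the spectral lower bound of Theorem~\ref{spectral}, which is sharpest precisely on graphs (e.g.\ appropriately chosen Kneser graphs) where Shinkar's coupling is loosest.
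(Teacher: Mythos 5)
The statement you were asked about is Conjecture \ref{power conjecture}; it is open, and the paper does not prove it either --- it only establishes the case $p = 1/m$ with $c_p = 1$ (via the random edge-partition and Zykov/AM--GM argument you cite), shows via odd cycles that $c_p < 1$ is sometimes necessary, and proves Theorem \ref{zykov generalization} as a barrier to the natural extension. Your proposal is likewise not a proof of the statement: the only complete argument in it is the chain $\EE[\chi(G_p)] \geq \EE[\chi(G_{1/\lceil 1/p\rceil})] \geq \chi(G)^{1/\lceil 1/p\rceil} \geq \chi(G)^{p/(1+p)}$, which is correct (monotonicity via the uniform coupling is fine) but is exactly what the paper calls the trivial bound, and it is genuinely weaker than the conjecture: for fixed $p \notin \{1/m\}$ and $\chi(G) = n \to \infty$ we have $n^{p/(1+p)} = o(n^p)$, so no constant $c_p$ can absorb the loss in the exponent. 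The missing idea is precisely how to get the exponent $p$ itself, and nothing in your sketch supplies it.

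Moreover, the escape routes you gesture at are either ruled out or unsubstantiated. Your remark that ``no straightforward improvement via an integral edge partition is possible'' is made precise by the paper's Theorem \ref{zykov generalization}: if each edge of $G$ lies in at least $n-t$ of the pieces $G_1,\dots,G_n$, then $\chi(G_1)\cdots\chi(G_n) \geq \chi(G)^{n/(t+1)}$ and this is tight for arbitrarily large $\chi(G)$, so any argument using only the edge-multiplicity of the cover cannot beat the trivial exponent --- this closes off the covering-based route rather than merely making it awkward. The suggested fixes (a fractional/weighted variant \`a la Mohar, or combining with the spectral bound of Theorem \ref{spectral}) are speculation: the fractional analogue concerns $\chi_f$, which can be smaller than $\chi$ by an unbounded factor, and the spectral bound degrades badly when $-\lmin(G)$ is large or $\sqrt{\Delta}+\sqrt{\log n}$ dominates, so neither is known to recover $c_p\,\chi(G)^p$ in general. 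In short: your partial bound is fine, your diagnosis of the obstacle is consistent with the paper, but the conjectured statement remains unproved by your argument, and you should present the $p/(1+p)$ bound as a partial result rather than as progress that interpolation could finish.
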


\begin{conjecture}\label{minimize}
For each $p \leq \frac{1}{2}$, we have $\EE[\chi(G_p)] \geq \EE[\chi(G(n,p))]$ for all $G$ with $\chi(G) = n$.
\end{conjecture}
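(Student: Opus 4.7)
The plan is to decompose $\EE[\chi(G_p)] = \sum_{k\geq 1} \Pr(\chi(G_p) \geq k)$ and argue tail-by-tail that any $G$ with $\chi(G)=n$ has at least as many obstructions to $(k-1)$-colorability as $K_n$ does.  The full conjecture looks out of reach, so I will target the two special cases stated in the introduction: $\chi(G)<4$, and $G$ planar.

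When $\chi(G)=1$, $G$ is edgeless and $\chi(G_p)\equiv 1$.  When $\chi(G)=2$, $\EE[\chi(G_p)] = 1+(1-(1-p)^{e(G)})$, and $e(G)\geq 1$ immediately gives $\EE[\chi(G_p)]\geq 1+p=\EE[\chi(G(2,p))]$.

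For $\chi(G)=3$, write $\EE[\chi(G_p)] = 1 + \Pr(G_p\text{ has an edge}) + \Pr(G_p\text{ is not bipartite})$, and split into two cases.  First, if $G$ contains a triangle $T$, then $\chi(G_p)\geq\chi(G_p[V(T)])$ and the right-hand side has the law of $\chi(G(3,p))$, so monotonicity of expectation finishes the case.  Second, if $G$ is triangle-free, its odd girth is at least $5$, so $e(G)\geq 5$ and $G$ contains an odd $5$-cycle; hence $\Pr(G_p\text{ has edge})\geq 1-(1-p)^5$ and $\Pr(G_p\text{ not bipartite})\geq p^5$.  It then suffices to verify the scalar inequality $1-(1-p)^5+p^5 \geq 1-(1-p)^3+p^3$ on $[0,1/2]$, which rearranges to $p^2(1+p)\leq(1-p)^2(2-p)$; one derivative check confirms this (with equality at $p=1/2$).

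For planar $G$, the $4$-color theorem gives $\chi(G)\leq 4$, reducing the new work to $\chi(G)=4$.  If $G\supseteq K_4$, couple directly as before.  Otherwise expand $\EE[\chi(G_p)] = 1 + \Pr(\text{edge}) + \Pr(\text{odd cycle}) + \Pr(\chi(G_p)\geq 4)$ and use the structure theory of $4$-critical planar graphs---Euler's formula density bounds, odd wheels, Dirac-style minor theorems---to locate enough small planar $4$-critical subgraphs to dominate each of the four $G(4,p)$ tail probabilities term by term.

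The main obstacle is that last step: pinning down a canonical small substructure (or a union of such) whose presence in a planar, $4$-chromatic, $K_4$-free graph lower-bounds $\Pr(\chi(G_p)\geq 4)$ by at least its $G(4,p)$ analog without a direct subgraph coupling.  More broadly, the tail-by-tail strategy looks hopeless for general $\chi(G)\geq 5$, since Haj\'os-type constructions build $n$-chromatic graphs with no $K_m$ for any $m\geq 3$; a genuinely global handle on the distribution of $(n-1)$-colorings of $G$ would be needed.
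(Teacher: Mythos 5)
Your scope choice is reasonable---the paper itself does not prove Conjecture \ref{minimize} in full either, only the special cases of Theorem \ref{planar-bukh} ($G$ planar or $\chi(G)<4$)---but within that scope there are two genuine gaps. First, in the $\chi(G)=3$, triangle-free case, your claim that $G$ ``contains an odd $5$-cycle'' is false: odd girth at least $5$ means the shortest odd cycle has length $5,7,9,\dots$ (take $G=C_7$, or any graph of large odd girth and chromatic number $3$), and for $C_7$ one has $\mathbb{P}(G_p\text{ not bipartite})=p^7<p^5$, so the bound you use does not hold. The repair is essentially what the paper does in Proposition \ref{3-minimizing}: every $3$-chromatic graph contains some odd cycle $C_{2k+1}$, whence $\EE[\chi(G_p)]\geq \EE[\chi((C_{2k+1})_p)]=2+p^{2k+1}-(1-p)^{2k+1}$ by the monotone coupling (Lemma \ref{subgraph lemma}), and one must then verify the whole one-parameter family $p^{2k+1}-(1-p)^{2k+1}\geq p^3-(1-p)^3$ for all $k\geq 1$ and $p\leq\tfrac12$, not only the $k=2$ instance $p^2(1+p)\leq(1-p)^2(2-p)$ that you checked. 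This is still elementary (compare $x^3-x^{2k+1}$ at $x=p$ and $x=1-p$), but as written your argument covers only odd girth exactly $5$.

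Second, the planar $\chi(G)=4$, $K_4$-free case---which is where essentially all the work lies---is not proved; you correctly identify the missing step and leave it open. The paper's route is: by Gr\"unbaum--Aksionov, every planar graph with at most three triangles is $3$-colorable, so a $4$-chromatic planar graph contains at least four triangles; one then enumerates the finitely many edge-minimal ways four triangles can overlap (the family $\mathbb{F}_4$ of the appendix), computes $\EE[\chi(\cdot_p)]$ for each configuration, and finds that $K_4$ has the smallest value except for one configuration ($G^8$), which is then excluded by a separating-triangle argument showing it cannot occur inside a $4$-critical planar graph. Note also that the comparison there is between the full expectations of these fixed small subgraphs (again via the subgraph coupling), not a term-by-term domination of the four tail probabilities of $G(4,p)$; the term-by-term version you propose is strictly stronger than needed and there is no reason each four-triangle configuration should satisfy it, so even the shape of your intended final step is likely too rigid. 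Your closing remark that the tail-by-tail strategy cannot extend to $\chi(G)\geq 5$ is consistent with the paper, which proves nothing beyond the planar and $n<4$ cases.
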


\noindent Conjecture \ref{power conjecture} was originally posed in \cite{shinkar} with the constant $c_p = 1$; however, we show that in general $c_p < 1$ is in fact required.  As discussed in section \ref{section power bound}, in the case $p=1/m$, a proof of this conjecture (with $c_p = 1$) follows from the classic result of Zykov \cite{ore} that $\chi(G \cup H) \leq \chi(G) \chi(H)$.  However, the tightness in our following generalization of this result highlights a barrier to this approach for general $p$.

\begin{restatable}{thm}{zykovGeneralization}\label{zykov generalization}
Fix integers $0 \leq t < n$.  Let $G$ be a graph and $G_1, G_2, \dots , G_n \subset G$ such that every edge of $G$ lies in at least $n-t$ of the $G_i$.  Then $\chi(G_1)\chi(G_2)\cdots\chi(G_n) \geq \chi(G)^{n/(t+1)}$. Furthermore, for any $t < n$, there are examples with $\chi(G)$ arbitrarily large for which this bound is tight.
\end{restatable}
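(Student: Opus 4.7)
The plan is to apply the Zykov inequality $\chi(H_1 \cup \dots \cup H_k) \leq \prod_i \chi(H_i)$ to every $(t+1)$-element subset of $\{G_1, \dots, G_n\}$ and then take the geometric mean of the resulting bounds. The key observation is a pigeonhole rewrite of the hypothesis: each edge of $G$ is missed by at most $t$ of the $G_i$, so for every $S \subseteq [n]$ with $|S| = t+1$ the union $\bigcup_{i \in S} G_i$ still contains every edge of $G$. Zykov's inequality then yields $\chi(G) \leq \prod_{i \in S} \chi(G_i)$ for each such $S$.

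Next I would multiply this inequality over all $\binom{n}{t+1}$ choices of $S$. Since each index $i \in [n]$ lies in exactly $\binom{n-1}{t}$ of these subsets, this produces
\[
\chi(G)^{\binom{n}{t+1}} \;\leq\; \prod_{i=1}^n \chi(G_i)^{\binom{n-1}{t}}.
\]
Taking the $\binom{n-1}{t}$-th root and using $\binom{n}{t+1}/\binom{n-1}{t} = n/(t+1)$ yields the desired bound $\prod_i \chi(G_i) \geq \chi(G)^{n/(t+1)}$.

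For the tightness claim, my plan is to build the extremal examples from maximum distance separable codes. Fix a prime power $m \geq n$, and let $V \subseteq \mathbb{F}_m^n$ be a Reed--Solomon code of dimension $t+1$, so that $|V| = m^{t+1}$ and any two distinct codewords agree in at most $t$ coordinates. Take $G := K_V$, so $\chi(G) = m^{t+1}$, and for each $i$ let $G_i$ be the graph on $V$ whose edges are the pairs $\{u,v\}$ with $u_i \neq v_i$. Each $G_i$ is a balanced complete $m$-partite graph (each value in $\mathbb{F}_m$ is attained by exactly $m^t$ codewords in position $i$), so $\chi(G_i) = m$; and the MDS property guarantees each edge of $G$ lies in at least $n-t$ of the $G_i$. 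Then $\prod_i \chi(G_i) = m^n = (m^{t+1})^{n/(t+1)} = \chi(G)^{n/(t+1)}$, matching the bound, and $m$ can be taken arbitrarily large. The main conceptual moves are (i) choosing to multiply the Zykov bounds over \emph{all} $(t+1)$-subsets rather than trying a cleverer subfamily, since this is what forces the exponent $n/(t+1)$ out of a pure pigeonhole argument, and (ii) recognizing that the Singleton bound for MDS codes matches this exponent exactly, so the Reed--Solomon construction is essentially forced.
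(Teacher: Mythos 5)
Your proposal is correct and follows essentially the same route as the paper: the lower bound via the pigeonhole observation that any $t+1$ of the $G_i$ cover all edges, Zykov's inequality on each $(t+1)$-subset, and averaging over all $\binom{n}{t+1}$ subsets with each index appearing $\binom{n-1}{t}$ times. Your Reed--Solomon construction for tightness is the same polynomial-evaluation example the paper uses (the paper phrases it as polynomials of degree at most $t$ over $\mathbb{F}_q$ with $G_i$ determined by the value $f(0)+f(i)$), just stated in coding-theoretic language.
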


As for Conjecture \ref{minimize}, we first prove that a condition such as $p \leq \frac{1}{2}$ is necessary in the following sense.
\begin{restatable}{thm}{ungeneralizable} \label{thm:ungeneralizable}
For any graph $H$, if $1 - \frac{1}{|E(H)|} < p < 1$, there exists a $G$ with $\chi(G) = \chi(H)$ such that $\EE[\chi(H_p)] > \EE[\chi(G_p)]$.
\end{restatable}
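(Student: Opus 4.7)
The plan is to sandwich $\chi(H)-1$ between $\EE[\chi(G_p)]$ (from above) and $\EE[\chi(H_p)]$ (from below), with the hypothesis $p > 1 - 1/|E(H)|$ providing a strict gap. Throughout, set $n = \chi(H)$ and $m = |E(H)|$.

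First I would prove $\EE[\chi(H_p)] > n - 1$. Since deleting a single edge drops the chromatic number by at most one, almost surely $\chi(H_p) \geq n - D$, where $D = m - |E(H_p)|$ is the number of removed edges. Taking expectations,
\[
\EE[\chi(H_p)] \geq n - (1-p)m,
\]
and the hypothesis rearranges to $(1-p)m < 1$, so $\EE[\chi(H_p)] > n - 1$.

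Next I would take $G$ to be an $n$-color-critical graph (that is, $\chi(G) = n$ but $\chi(G - e) \leq n - 1$ for every edge $e$) with $M := |E(G)|$ as large as desired. For $n \geq 3$ such graphs are available with arbitrarily many edges: odd cycles $C_{2k+1}$ when $n = 3$, and standard constructions (Haj\'os, Toft, Myciel\'ski, blow-ups) when $n \geq 4$. Criticality forces $\chi(G_p) \leq n - 1$ whenever $G_p \neq G$, while $\chi(G_p) = n$ occurs only on the event $\{G_p = G\}$, which has probability $p^M$. Hence
\[
\EE[\chi(G_p)] \leq (n-1)(1 - p^M) + n \cdot p^M = n - 1 + p^M.
\]
Since $p < 1$ and the hypothesis gives $\EE[\chi(H_p)] - (n-1) \geq 1 - (1-p)m > 0$, choosing $M$ large enough that $p^M < 1 - (1-p)m$ yields $\EE[\chi(G_p)] < \EE[\chi(H_p)]$, completing the construction.

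The main obstacle I foresee is the degenerate case $n = 2$, where the only $n$-critical graph is $K_2$ and the above construction cannot take $M$ large. For this case I would argue directly: every bipartite graph $F$ with at least one edge satisfies $\EE[\chi(F_p)] = 2 - (1 - p)^{|E(F)|}$, and comparing $F = H$ to $G = K_2$ gives $\EE[\chi(G_p)] = 1 + p < 2 - (1-p)^m = \EE[\chi(H_p)]$ whenever $m \geq 2$, which is the nontrivial range of the hypothesis.
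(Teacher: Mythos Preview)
Your proof is correct and follows essentially the same approach as the paper: bound $\EE[\chi(H_p)]$ from below via the edge-deletion inequality $\chi(H)-\EE[\chi(H_p)]\le (1-p)|E(H)|$, bound $\EE[\chi(G_p)]$ from above by $n-1+p^{|E(G)|}$ using an edge-critical $G$ with arbitrarily many edges, and choose $|E(G)|$ large enough to create a strict gap. Your treatment is slightly more thorough in that you explicitly handle the $n=2$ case, which the paper's proof leaves implicit.
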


\noindent However, we are in fact able to prove the following special case of Conjecture \ref{minimize}.

\begin{restatable}{thm}{planar-bukh} \label{planar-bukh}
Suppose $\chi(G) = n$ and $p \leq \frac{1}{2}$.  If $G$ is planar or if $n < 4$, then $\EE[\chi(G_p)] \geq \EE[\chi(G(n,p))]$.
\end{restatable}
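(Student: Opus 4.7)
The first step I would take is to reduce to the case where $G$ is $n$-critical. Since $G' \subseteq G$ implies $G'_p \subseteq G_p$ in the canonical edge coupling, we have $\chi(G'_p) \leq \chi(G_p)$ pointwise and hence $\EE[\chi(G_p)] \geq \EE[\chi(G'_p)]$. Deleting edges of $G$ while preserving $\chi = n$, I may therefore assume $G$ is $n$-critical. I would then expand
\[
\EE[\chi(G_p)] = \sum_{k=1}^n \Pr[\chi(G_p) \geq k]
\]
and compare with the analogous expansion for $G(n,p)$. The workhorse for these comparisons is the elementary lemma that for $p \leq 1/2$, the quantity $(1-p)^r - p^r$ is non-increasing in $r \geq 1$, which follows from the identity
\[
\bigl[(1-p)^r - p^r\bigr] - \bigl[(1-p)^{r+1} - p^{r+1}\bigr] = p(1-p)\bigl[(1-p)^{r-1} - p^{r-1}\bigr] \geq 0.
\]
Combined with Dirac's minimum-degree bound $|E(G)| \geq \binom{n}{2}$ for $n$-critical $G$, this lemma pairs the $k=2$ and $k=n$ contributions in the favorable direction.

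When $n \leq 3$ the critical graphs are highly restricted. For $n \in \{1,2\}$ the only critical graph is $K_n$ itself and there is nothing to prove. For $n=3$ a critical graph is an odd cycle $C_{2k+1}$ whose only odd cycle is itself, so $\chi(C_{2k+1,p}) = 3$ iff the whole cycle survives and $\chi = 1$ iff no edge survives. A direct computation gives $\EE[\chi(C_{2k+1,p})] = 2 + p^{2k+1} - (1-p)^{2k+1}$, while $\EE[\chi(G(3,p))] = 2 + p^3 - (1-p)^3$, and the inequality is immediate from the monotonicity lemma.

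For planar $G$, the Four Color Theorem lets me assume $\chi(G) = 4$ (the $n < 4$ cases were already handled), and the reduction above further lets me assume that $G$ is $4$-critical and planar. Writing $m = |E(G)| \geq 6$, the $4$-criticality of $G$ forces $\chi(G_p) = 4$ iff $G_p = G$, so
\[
\EE[\chi(G_p)] = 1 + \bigl(1-(1-p)^m\bigr) + \Pr[G_p \text{ has an odd cycle}] + p^m,
\]
and the same decomposition holds for $K_4$ with $m=6$. The monotonicity lemma pairs the $k=2$ and $k=4$ contributions in the favorable direction, and the whole theorem reduces to the single inequality
\[
\Pr[G_p \text{ contains an odd cycle}] \geq \Pr[G(4,p) \text{ contains a triangle}].
\]

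Establishing this last inequality is what I expect to be the main obstacle. My plan is to exploit the structural properties of $4$-critical planar graphs: Gr\"otzsch's theorem guarantees a triangle in $G$, and Hadwiger's conjecture (known for $k \leq 4$) gives a $K_4$-subdivision. If $G$ itself contains $K_4$ as a subgraph, the inequality follows at once from the canonical coupling. The residual case of $4$-critical planar graphs that avoid $K_4$ as a subgraph (for instance, the icosahedron) should require more care: either locating several triangles or short odd cycles in $G$ whose joint presence in $G_p$ stochastically dominates the four triangles of $K_{4,p}$, or a discharging / Euler-formula argument forcing a sufficient density of odd cycles in every $4$-critical planar graph.
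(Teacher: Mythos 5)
Your reductions are sound: passing to an edge-critical subgraph, the level-set expansion $\EE[\chi(G_p)]=\sum_k \Pr[\chi(G_p)\geq k]$, the monotonicity of $(1-p)^r-p^r$ for $p\leq \tfrac12$, and the complete treatment of $n\leq 3$ (which matches the paper's Proposition on $3$-minimizers) are all correct, as is the pairing of the $k=2$ and $k=4$ terms using $|E(G)|\geq 6$ for $4$-critical $G$. But the statement you defer --- that every $4$-critical planar $G$ satisfies $\Pr[G_p\text{ contains an odd cycle}]\geq \Pr[G(4,p)\text{ contains a triangle}]$ --- is not a ``residual case''; it is essentially the entire content of the theorem for planar graphs, and your sketch does not establish it. Gr\"otzsch gives only one triangle, which is hopeless ($p^3=\tfrac18$ versus $\tfrac{23}{64}$ for $K_4$ at $p=\tfrac12$); a $K_4$-subdivision need not contain any short odd cycle besides what criticality already forces; and since a $4$-critical graph containing $K_4$ as a subgraph is $K_4$ itself, your ``easy'' case is vacuous, so all non-complete $4$-critical planar graphs land in the unresolved case.

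Moreover, the difficulty is genuinely delicate, which is why the paper spends its appendix on it. The paper invokes the Gr\"unbaum--Aksionov theorem (a planar graph with at most three triangles is $3$-colorable) to force at least four triangles, then enumerates all edge-minimal four-triangle configurations and compares their expected chromatic numbers with $K_4$. Crucially, ``four triangles'' alone does not suffice: if the four triangles share a common edge (the configuration $K_{2,4}$ plus an edge joining the two-vertex side, the paper's $G^8$), the only odd cycles are those four triangles and the survival probability at $p=\tfrac12$ is $\tfrac{1}{2}\bigl(1-(\tfrac34)^4\bigr)=\tfrac{175}{512}<\tfrac{23}{64}$, so your key inequality would fail for any $4$-critical planar graph containing only that odd-cycle structure. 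The paper must therefore separately prove, via an interior/exterior criticality argument, that no $4$-critical planar graph contains this configuration. Your proposal contains no substitute for this step (nor for the configuration analysis showing all other four-triangle patterns beat $K_4$), so as written there is a genuine gap exactly where the paper's main work lies.
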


\noindent We also exhibit numerical evidence supporting Conjecture \ref{minimize} for Mycielski graphs.  As Mycielski graphs are prototypical examples of triangle-free graphs with high chromatic number, they are perhaps the most natural candidates for a possible counterexample to our conjecture.  Our numerical exploration of these graphs also suggests some very interesting structure in the distribution of $\chi(G_p)$, which we feel is of sufficient independent interest to warrant its own study.

\subsection{Outline of our paper}
We begin with section \ref{section spectral}, in which we prove our spectral result of Theorem \ref{spectral}.  We continue in section \ref{section power bound} with a discussion of conjecture \ref{power conjecture} and proof Theorem \ref{zykov generalization}.  In section \ref{section mycielskians}, we present Mycielskian graphs in the context of Conjecture \ref{minimize} and use them to prove Theorem \ref{thm:ungeneralizable}.  Section \ref{section proof of planar-bukh} is devoted to a proof of Theorem \ref{planar-bukh} with some of the casework placed in an appendix.  Finally, in section \ref{section kneser} we show how our spectral bound can be applied to Kneser graphs, and we state and disprove a related conjecture of Shinkar on the chromatic number of induced subgraphs.

\vspace*{12pt}
\noindent \textbf{Acknowledgement:} We would like to thank the support and funding of the 2018 \textit{Summer Undergraduate Math Research at Yale} (SUMRY) program, at which this project was completed.

\section{Spectral bound: proof of Theorem \ref{spectral}}\label{section spectral}
\noindent Among the spectral bounds on the chromatic number, the first (and best-known) is due to Hoffman \cite{hoff}:
\begin{align*}
\chi(G) \geq 1 + \frac{\lmax (G)}{- \lmin (G)} \ ,
\end{align*}
where $\lmax$ and $\lmin$ are the maximum and minimum eigenvalues of $G$'s adjacency matrix, $A_G$.  In order to use Hoffman's bound to obtain a lower bound on the expected chromatic number, we need to estimate the variability in the eigenvalues of $A_{G_p}$.  For this, we appeal to a result of Bandeira and van Handel, which appears as a special case of Corollary 3.12 (see also remark 3.13) of ~\cite{bandeira}.  Here, we cite only a special case suited for our needs.
\begin{thm}[Bandeira and van Handel]\label{thm bandeira}
Let $X$ be an $n \times n$ symmetric matrix whose entries are independent\footnote{That is to say $X_{i,j}$ is independent of every other entry except $X_{j,i}$.} mean $0$ random variables of magnitude at most 1.  There is a universal constant $C$ such that
\[
\mathbb{P}\left( \Vert X \Vert \geq C \left( \sigma + \sqrt{\log(n)} \right) \right) \leq n^{-100},
\]
where $\Vert X \Vert = \displaystyle \sup_{\vec{0} \neq \vec{u} \in \mathbb{R}^n} \frac{\Vert X \vec{u} \Vert _2}{\Vert \vec{u} \Vert_2}$ is the operator norm of $X$, and $\sigma = \displaystyle \max_{i} \sqrt{ \sum_{j} \mathbb{E}[X_{i,j} ^2]}$.
\end{thm}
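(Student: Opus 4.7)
The plan is to control $\|X\|$ via the trace moment method: bound $\mathbb{E}[\|X\|^{2p}] \leq \mathbb{E}[\mathrm{tr}(X^{2p})]$ for a suitable even power, and then apply Markov's inequality with $p \asymp \log n$, which is what forces a $\sqrt{\log n}$ factor into the final bound. The first move is a symmetrization/comparison step reducing to the Gaussian case. Because the operator norm is a convex, absolutely symmetric function of the entries and $|X_{ij}| \leq 1$, one can attach independent Rademacher signs to the entries and then dominate each Rademacher by a standard Gaussian (after rescaling), so that up to a universal constant the moments of $\|X\|$ are controlled by those of a symmetric Gaussian matrix $G$ with $\mathbb{E}[G_{ij}^2] = \mathbb{E}[X_{ij}^2]$. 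In particular $G$ has the same parameter $\sigma$ as $X$.

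For the Gaussian matrix $G$, expand
\[
\mathbb{E}[\mathrm{tr}(G^{2p})] = \sum_{i_1, \ldots, i_{2p}} \mathbb{E}\bigl[G_{i_1 i_2} G_{i_2 i_3} \cdots G_{i_{2p} i_1}\bigr],
\]
and apply Wick's theorem: only closed walks of length $2p$ on $[n]$ in which every edge is traversed an even number of times contribute, and the contribution of each such walk factors as a product of the second moments $\mathbb{E}[G_{ij}^2]$. Group walks by the isomorphism class of the underlying multigraph (their ``shape''), and stratify the sum according to how tree-like the shape is. Tree-like walks contribute at most $n \cdot (2\sigma)^{2p}$ times a mild combinatorial factor (the usual semicircle-law bound), while walks whose shape carries excess edges pick up extra weight bounded using the entrywise estimate $|X_{ij}| \leq 1$ and a $\sqrt{p}$ factor from the number of available extra edges in a walk of length $2p$. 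Combining these two regimes yields a bound of the form
\[
\mathbb{E}[\mathrm{tr}(G^{2p})] \leq n \cdot \bigl( C_0 (\sigma + \sqrt{p}) \bigr)^{2p}
\]
for a universal constant $C_0$.

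Finally, by Markov's inequality,
\[
\mathbb{P}\bigl(\|X\| \geq t\bigr) \leq t^{-2p}\, \mathbb{E}\bigl[\mathrm{tr}(X^{2p})\bigr] \leq t^{-2p} \cdot n \cdot \bigl(C_0(\sigma + \sqrt{p})\bigr)^{2p}.
\]
Choosing $p = \lceil 101 \log n \rceil$ and $t = C(\sigma + \sqrt{\log n})$ with $C$ a sufficiently large multiple of $C_0$ makes the right-hand side at most $n^{-100}$, giving the claimed tail bound after undoing the Gaussian comparison. The main obstacle is the combinatorial accounting in the middle step: a naive walk count gives only a multiplicative $\sigma \sqrt{\log n}$ dependence, and obtaining the correct \emph{additive} form $\sigma + \sqrt{\log n}$ requires the careful stratification by walk shape in which the tree-like contributions (governed by $\sigma$) and the cyclic/excess contributions (governed by the entrywise bound and by the walk length) are estimated separately and then recombined. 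This is precisely the refinement of the non-commutative Khintchine inequality that Bandeira and van Handel introduce, and it is the only place where the hypothesis $|X_{ij}| \leq 1$ is used in an essential, non-variance way.
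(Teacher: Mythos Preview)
The paper does not prove this theorem. Theorem~\ref{thm bandeira} is quoted from \cite{bandeira} (specifically, as a special case of Corollary~3.12 there, with Remark~3.13) and is used as a black box in the proof of Theorem~\ref{spectral}. So there is no ``paper's own proof'' to compare your proposal against.

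That said, your sketch is a fair high-level summary of the strategy Bandeira and van Handel actually carry out: symmetrization, the trace moment expansion $\mathbb{E}[\mathrm{tr}(X^{2p})]$, a combinatorial analysis of closed walks stratified by how far their shape is from a tree, and then Markov with $p\asymp\log n$. You also correctly flag the real content, namely that a naive moment bound only yields a multiplicative $\sigma\sqrt{\log n}$ and that obtaining the additive form $\sigma+\sqrt{\log n}$ requires the refined walk-shape accounting that separates the tree-like (variance-driven) contribution from the excess-edge (sup-norm-driven) contribution. One small caution: the reduction to the Gaussian case via Rademacher signs plus Gaussian domination is not quite how \cite{bandeira} proceeds in the general bounded-entry setting; they run the moment method directly and use the boundedness hypothesis inside the walk enumeration rather than passing through a Gaussian comparison first. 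Your version can be made to work, but the comparison step for symmetric matrices with dependent $(i,j)$ and $(j,i)$ entries needs a bit of care, and the constants one gets this way are typically worse.
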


\noindent With this, we can prove our lower bound on the spectrum of $G_p$.

\begin{proof}[Proof of Theorem \ref{spectral}]
We wish to relate the eigenvalues of $A_{G_p}$ with those of $A_G$.  For this, consider the random $n \times n$ matrix $X = p A_{G_p} - A_G$.  Since the eigenvalues of $p A_G$ are just $p$ times those of $A_G$, we will be able to control the eigenvalues of $A_{G_p}$ provided that $\Vert X \Vert$ is small.  Then $X$ is symmetric with independent entries of mean $0$, which are each bounded in absolute value by $1$.  Thus, $X$ satisfies the conditions of Theorem \ref{thm bandeira} with $\displaystyle \sigma = \max_{i} \sqrt{\sum_{j} \mathbb{E}[X^{2} _{i,j}]} \leq \sqrt{\Delta},$ implying
\[
\mathbb{P}\left( \Vert X \Vert \geq C \left( \sqrt{\Delta} + \sqrt{\log(n)} \right) \right) \leq n^{-100}.
\]

\noindent For any $n \times n$ matrix $M$ and any $\vec{0} \neq \vec{u} \in \mathbb{R}^n$ consider the \textit{Rayleigh quotient,} $R(M, \vec{u}) = \dfrac{\langle \vec{u}, M \vec{u} \rangle}{\langle \vec{u}, \vec{u} \rangle}.$  For symmetric matrices, it is well known that $\lmax (M) = \sup_{\Vert \vec{u} \Vert =1} R(M, \vec{u})$ and $\lmin (M) = \inf_{\Vert \vec{u} \Vert =1} R(M, \vec{u})$.  Thus, for any symmetric matrices $M$ and $N$ we have
\begin{eqnarray*}
\lmax(M) &=& \sup_{\Vert \vec{u} \Vert = 1} R(M, \vec{u}) = \sup_{\Vert \vec{u} \Vert= 1} \Big[ R(N, \vec{u})+ R(M-N, \vec{u}) \Big] \leq \sup_{\Vert \vec{u} \Vert = 1} \Big[ R(N, \vec{u}) \Big] + \sup_{\Vert \vec{u} \Vert = 1}\Big[ R(M-N, \vec{u}) \Big]\\
&=& \lmax(N) + \sup_{\Vert \vec{u} \Vert = 1} \langle \vec{u}, (M-N) \vec{u} \rangle \leq \lmax(N) + \Vert M-N \Vert,
\end{eqnarray*}
where the last inequality comes from the definition of the operator norm and the Cauchy-Schwarz inequality.  Thus, $|\lmax(M) - \lmax(N)| \leq \Vert M-N \Vert$ and by similar reasoning, $|\lmin(M) - \lmin(N)| \leq \Vert M-N \Vert$.

From this, we see
\begin{eqnarray*}
\Vert X \Vert &\geq& |\lmax(pA_{G}) - \lmax(A_{G_p})| = |p\lmax(A_{G}) - \lmax(A_{G_p})|, \qquad \text{and}\\
\Vert X \Vert &\geq& |\lmin(pA_{G}) - \lmin(A_{G_p})| = |p\lmin(A_{G}) - \lmin(A_{G_p})|.
\end{eqnarray*}

\noindent And since almost surely $\Vert X \Vert \leq C \left(\sqrt{\Delta} + \sqrt{\log(n)} \right)$, a simple rearrangement completes the proof.
\end{proof}

After combining this with Hoffman's bound, we almost surely have the lower bound
\[
\chi(G_p) \geq \frac{\lmax (G)}{- \lmin (G) + (C/p) (\sqrt{\Delta} + \sqrt{\log(n)})}.
\]
Note that if $\Delta > \log(n)$, we could absorb the $\sqrt{\log{n}}$ term into the constant, and since $\lmax(G) \geq 2|E(G)| / n$, we almost surely have the more compact
\[
\chi(G_p) \geq \frac{\lmax (G)}{- \lmin (G) + (C/p)\sqrt{\Delta}} \geq \frac{2|E(G)| / n}{- \lmin (G) + (C/p)\sqrt{\Delta}}, \qquad \qquad \text{provided that $\Delta > \log(n)$.}
\]

\section{Discussion of Conjecture \ref{power conjecture}}\label{section power bound}
\noindent Let us now turn our attention to Conjecture \ref{power conjecture}.  As a warm-up (and helpful example), suppose that $G$ is an odd cycle on $2k+1$ vertices.  Then we have $\mathbb{E}[\chi(G_{p})] = 2 + p^{2k+1} -(1-p)^{2n+1}$.  Therefore, for $p \in (0,1)$ we have $\lim_{k \to \infty} \EE[\chi(G)_{p}] = 2$.  On the other hand, $\chi(G) = 3$, which shows $\EE[\chi(G_p)] / \chi(G) ^{p} \to 2 \cdot 3^{-p}$.  Thus, if Conjecture \ref{power conjecture} holds, we need $c_p \leq 2 \cdot 3^{-p}$, which is already less than 1 when $p = 2/3$.

On the other hand, consider the following proof of Conjecture \ref{power conjecture} when $p = 1/m$ for positive integer $m$.  We first randomly assign each edge of $G$ to an element of $\{1,2, \ldots, m\}$, and let $G^{i}$ denote the edges labelled $i$.  Clearly $\chi(G^1 \cup G^2 \cup \cdots \cup G^m) = \chi(G)$, so we have
\[
\chi(G) ^{1/m} = \chi(G^1 \cup G^2 \cup \cdots \cup G^m) ^{1/m} \leq \left(\prod_{i=1} ^{m} \chi(G^{i}) \right) ^{1/m} \leq \dfrac{1}{m} \sum_{i=1} ^{m} \chi(G^{i}),
\]
where the last inequality holds by the AM-GM inequality.  Taking the expected value of both sides and using the fact that each $G^{i}$ has the same distribution as $G_{1/m}$, we obtain
\[
\chi(G) ^{1/m} \leq \EE \left[ \dfrac{1}{m} \sum_{i=1} ^{m} \chi(G^{i}) \right] = \dfrac{1}{m} \sum_{i=1} ^{m} \EE [\chi(G^{i})] = \EE[\chi(G_{1/m})].
\]

Now suppose for motivation that we would like to prove something like $\mathbb{E}[\chi(G_{2/3})] \geq c_{2/3} \ \chi(G)^{2/3}$ in a similar way.  We could consider a construction as above to get a partition of $G$ into $3$ disjoint graphs $G^1, G^2, G^3$ and then consider the graphs $G^{I} = \cup_{i \in I} G^{i}$ with $|I|=2$.  With this, each $G^{I}$ has the same distribution as $G_{|I|/3}$, and we know that each edge of $G$ appears in $2$ elements of $\{G^{I} \ : \ |I|=2\}$.  Proceeding as before, we would hope for a bound such as $\chi(G^{1,2}) \chi(G^{1,3}) \chi(G^{2,3}) \geq \chi(G)^{2}$, and in fact replacing the exponent on the right-hand-side with anything greater than $3/2$ would improve on the trivial bound $\EE[ \chi(G_{2/3}) ] \geq \EE[\chi(G_{1/2})] \geq \chi(G)^{1/2}$.  However this is not possible in general, and arguments that only use that each edge shows up in the correct number of $G^{I}$ cannot improve on these trivial bounds.

\zykovGeneralization*

\begin{proof}
To obtain the lower bound, observe that since each edge in $G$ lies in at least $n-t$ of the $G_i$, all of $G$'s edges must be contained in the union of any $t+1$ distinct $G_i$. Hence, for any $\{i_1,\dots,i_{t+1} \} \subset [n]$ we have $\chi(G_{i_1})\cdots \chi(G_{i_{t+1}}) \geq \chi(G_{i_1}\cup \dots \cup G_{i_{t+1}}) = \chi(G)$, which yields the desired result by taking the product over all $(t+1)$-element subsets of indices.

To construct a family of examples for which our result is tight, let $q > n$ be any prime, let $\mathbb{F}_q$ denote the field with $q$ elements, and let $V = \{f(x) \in \mathbb{F}_q [x] : \deg(f) \leq t\}$ denote the set of polynomials over $\mathbb{F}_q$ of degree at most $t$.  Because $n > t$, two polynomials in $V$ are equal as functions iff all their coefficients are equal, and $|V| = q^{t+1}$.  Let $G$ be the complete graph on $V$, and for each $i \in \{1, 2, \ldots , n\}$, let $G_i$ be the graph on $V$ with $f \sim g$ iff $f(0) + f(i) \neq g(0) + g(i)$.  Then $G_i$ is a complete $q$-partite graph where $\{f \in V : f(0) + f(i) = c\}$ is independent for each $c \in \mathbb{F}_q$.  So $\chi(G_i) = q$ for all $i \leq n$, implying $\chi(G_1) \chi(G_2) \cdots \chi(G_n) = q^n = \chi(G)^{n/(t+1)}$.

We claim that in this construction, each edge appears in at least $n-t$ graphs $G_i$.  To see this, suppose the edge $f\sim g$ is missing from $G_i$ for all $i$ in some set $I$.  This implies $f(i) + f(0) - g(i) - g(0) = 0$ for all $i \in I$.  But $f(x) + f(0) - g(x) - g(0)$ is a polynomial of degree at most $t$, which is equal to $0$ for all $i \in I$.  Thus, either $|I| \leq t$ or else we need $f - g$ is the zero polynomial, implying $f = g$.
\end{proof}

\section{Need for $p<1- \varepsilon$ in Conjecture \ref{minimize}}\label{section mycielskians}
Let us recall the following well-known construction of Mycielski \cite{mycielski}.  For a graph $G$ on vertex set $V$, let $M(G)$ denote the graph with vertex set $V \times \{0,1\} \cup \{x\}$ and with the edges $(v,1) \sim x$ for all $v \in V$ as well as all the edges of the form $(u,i) \sim (v,j)$ where $u \sim_{G} v$ and $i \neq j$.


With this, we can define the sequence of \textit{Mycielskian graphs} where $M_2$ is the 2-vertex graph with a single edge, and $M_k = M(M_{k-1})$ for all $k \geq 3$.  To get a feeling for Conjecture \ref{minimize}, consider Figure 1, which contains plots of $\mathbb{E}[\chi(G(k,p)]$ and $\mathbb{E}[\chi((M_k)_p)]$ (viewed as functions of $p$) for several small values of $k$.

\begin{figure}[h]
    \centering
    \begin{minipage}{0.45\textwidth}
        \centering
 \includegraphics[width=3in]{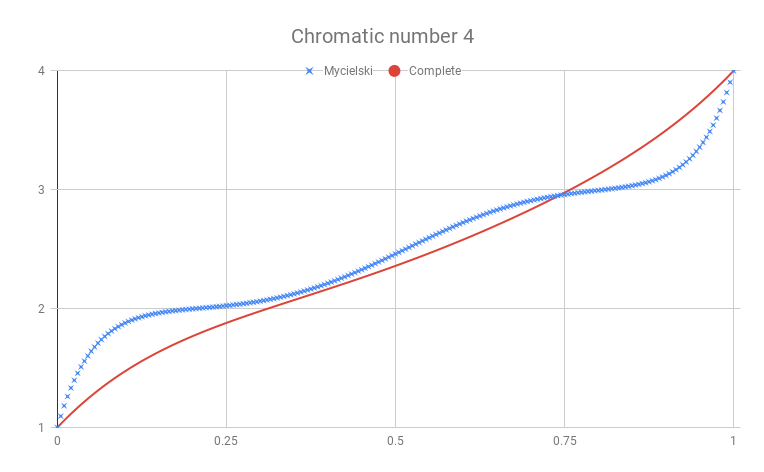}
    \end{minipage}\hfill
    \begin{minipage}{0.45\textwidth}
        \centering
\includegraphics[width=3in]{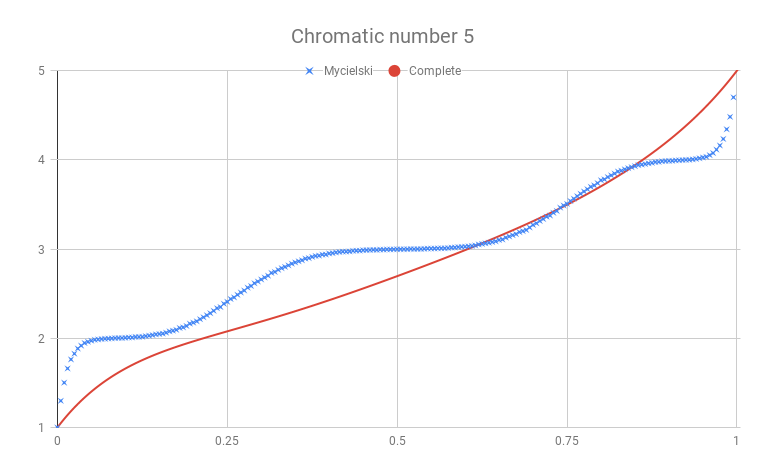}
    \end{minipage}
\includegraphics[width=3.5in]{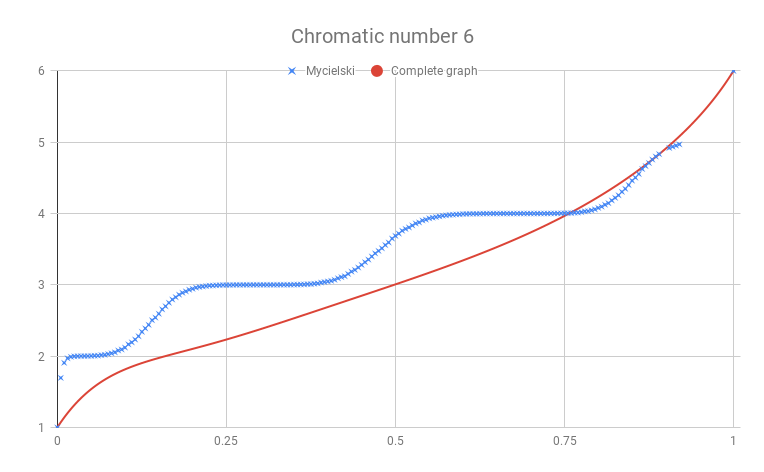}    
\caption{Plots of $\mathbb{E}[\chi(G_p)]$ for $M_k$ (thicker, blue) and $K_k$ (thinner, red) for $k=4, 5,6$}
\end{figure}

For every $G$, it is not difficult to see that $\chi(M(G)) = \chi(G) + 1$.  Thus, since $\chi(M_k) = k$, Conjecture \ref{minimize} asserts that $\mathbb{E}[\chi((M_k)_p)] \geq \mathbb{E}[\chi(G(k,p))]$ whenever $p \leq \frac{1}{2}$, which---from Figure 1---we see to be true for $4 \leq k \leq 6$.  Although these plots agree with Conjecture \ref{minimize} for $p \leq \frac{1}{2}$, we also see that each has values of $p$ near $1$ for which the inequality of our conjecture fails (because $M_k$ has more edges than the complete graph on $k$ vertices).  In fact, we show that this is unavoidable in the following sense.

\ungeneralizable*

\begin{proof}An \textit{edge-critical graph} is one in which every proper subgraph has lower chromatic number.  For each $n \geq 3$, there are graphs $G$ with arbitrarily many edges and fixed $\chi(G) = n$---for instance we could obtain such a graph by iterating the Mycielskian construction starting with a large odd cycle.\footnote{It is easy to see that the Mycielskian construction preserves edge-criticality and that odd cycles are edge critical.}  Thus, we can select an edge-critical $G$ such that $1-(1-p) |E(H)| > p^{|E(G)|}$ and $\chi(G) = \chi(H) = n$.  For this $G$, edge-criticality implies $\mathbb{E}[\chi(G_p)] \leq n p^{|E(G)|} + (n-1) (1- p^{|E(G)|}) = n-1 + p^{|E(G)|} < n - (1-p) |E(H)|.$  On the other hand, for any graph $\chi(H) - \mathbb{E}[\chi(H_p)] \leq (1-p) |E(H)|$ since $(1-p) |E(H)|$ is equal to the expected number of edges removed going from $H$ to $H_p$, and removing an edge lowers the chromatic number by at most 1.  Thus we have $\mathbb{E}[\chi(G_p)] < n - (1-p)|E(H)| \leq \mathbb{E}[\chi(H_p)]$, as desired.
\end{proof}

As an aside, it is interesting to note the apparent ``plateaus" in the graphs of $\mathbb{E}[\chi((M_k)_p)]$.  For values of $p$ in these plateaus, it seems reasonable to conjecture that the distribution of $\chi((M_k)_p)$ is tightly concentrated on an integer value, and it would be interesting to study these graphs for large $k$.

\section{Proof of Theorem \ref{planar-bukh}}\label{section proof of planar-bukh}
Conjecture \ref{minimize} naturally leads to the following definition.

\begin{definition}
For a family of graphs $\calF$ and fixed $p\in (0,1)$, we say that $G \in \calF$ is an \textbf{$n$-minimizer} among $\calF$ if $\mathbb{E}[\chi(H_p)] \geq \mathbb{E}[\chi(G_p)]$ for all $H \in \calF$ with $\chi(H) = \chi(G) = n$.
\end{definition}

\noindent In the language of $n$-minimizers, Conjecture \ref{minimize} states that for all $p \leq \frac{1}{2}$, and all $n$, $K_n$ is an $n$-minimizer among all graphs.  And for $n \geq 3$, Theorem \ref{thm:ungeneralizable} states that no graph is an $n$-minimizer for all $p \in (1-\varepsilon, 1]$.

For small chromatic numbers, Conjecture \ref{minimize} is easy to verify, and the case $n \in \{1, 2\}$ there is nothing to show.  As the first interesting case, the classification of 3-minimizers is given by the following lemma.

\begin{proposition}\label{3-minimizing}
For each $p\in (0,\frac{1}{2})$, $K_3$ is the unique 3-minimizer; for $p=\frac{1}{2}$, every odd cycle is a 3-minimizer; and for each $p\in(\frac{1}{2},1)$, there are no 3-minimizers.
\end{proposition}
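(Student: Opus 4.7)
My plan is to exploit the fact that any graph $G$ with $\chi(G) = 3$ satisfies $\chi(G_p) \in \{1, 2, 3\}$, and so
\[
\mathbb{E}[\chi(G_p)] = 2 + \Pr[G_p \text{ contains an odd cycle}] - \Pr[G_p \text{ is edgeless}].
\]
The goal is to reduce to the case of odd cycles. Every 3-chromatic $G$ contains a subgraph $H$ that is 3-edge-critical, and the classical characterization identifies such $H$ as an odd cycle $C_{2k+1}$ (possibly together with isolated vertices, which are irrelevant to $\chi$). Coupling $H_p$ as a sub-edge-set of $G_p$ gives $\chi(H_p) \leq \chi(G_p)$ pointwise, hence $\mathbb{E}[\chi(G_p)] \geq \mathbb{E}[\chi(H_p)]$. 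Since the only odd cycle inside $C_{2k+1}$ is itself, a direct computation gives
\[
\mathbb{E}[\chi((C_{2k+1})_p)] = 2 + p^{2k+1} - (1-p)^{2k+1}.
\]

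Writing $T_n := (1-p)^n - p^n$, expanding yields the key identity
\[
T_n - T_{n+2} = p(1-p)\bigl[(1-p)^{n-1}(2-p) - p^{n-1}(1+p)\bigr].
\]
For $p < 1/2$ both $(1-p)^{n-1} > p^{n-1}$ and $(2-p) > (1+p)$ hold, so the bracket is strictly positive and $T_n$ is strictly decreasing in $n \geq 1$. Hence $\mathbb{E}[\chi(K_{3,p})] = 2 - T_3 < 2 - T_{2k+1} = \mathbb{E}[\chi((C_{2k+1})_p)]$ for every $k \geq 2$, and the coupling bound then gives $\mathbb{E}[\chi(G_p)] \geq \mathbb{E}[\chi(K_{3,p})]$ for every 3-chromatic $G$. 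At $p = 1/2$ we have $T_n \equiv 0$, so every odd cycle achieves $\mathbb{E}[\chi((C_{2k+1})_{1/2})] = 2$, and the coupling bound produces $\mathbb{E}[\chi(G_{1/2})] \geq 2$, confirming every odd cycle as a 3-minimizer. For $p > 1/2$ the same identity (with the roles of $p$ and $1-p$ reversed) shows $\mathbb{E}[\chi((C_{2k+1})_p)]$ is strictly decreasing in $k$ with limit $2$, while the coupling still forces $\mathbb{E}[\chi(G_p)] > 2$ for every 3-chromatic $G$; so no graph achieves the infimum and no 3-minimizer exists.

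For uniqueness of $K_3$ at $p < 1/2$, I would split into two cases. If $G$ is triangle-free and 3-chromatic, its shortest odd cycle has length $2k+1 \geq 5$, and taking $H$ to be this cycle gives $\mathbb{E}[\chi(G_p)] \geq \mathbb{E}[\chi(H_p)] > \mathbb{E}[\chi(K_{3,p})]$ directly from $T_3 > T_{2k+1}$. Otherwise $G$ contains a triangle $T$; if furthermore $G \neq K_3$ up to isolated vertices, there exists an edge $e \in E(G) \setminus E(T)$, and the event that all three edges of $T$ are absent from $G_p$ while $e$ is retained occurs with probability at least $(1-p)^3 p > 0$. In that event $\chi(T_p) = 1$ while $\chi(G_p) \geq 2$, yielding the strict improvement needed over the coupling bound.

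The main obstacle is the monotonicity of $\mathbb{E}[\chi((C_{2k+1})_p)]$ in $k$, but once the factorization of $T_n - T_{n+2}$ above is in hand this reduces to a single-line algebraic manipulation; the remainder is organizing the coupling step and handling the small case analysis for uniqueness.
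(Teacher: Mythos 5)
Your proposal is correct and takes essentially the same route as the paper: reduce to an odd cycle contained in $G$ via the coupling/subgraph monotonicity lemma, compute $\mathbb{E}[\chi((C_{2k+1})_p)] = 2 + p^{2k+1} - (1-p)^{2k+1}$ exactly, and compare this quantity across $k$ in the three regimes $p<\tfrac12$, $p=\tfrac12$, $p>\tfrac12$. Your explicit factorization of $T_n - T_{n+2}$ and the two-case uniqueness argument (triangle-free versus proper supergraph of a triangle) merely spell out details the paper leaves implicit.
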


For this, we first need the following easy lemma.

\begin{lemma}\label{subgraph lemma}
Let $G$ be a graph and $H$ a proper subgraph.  Then $\EE[\chi(H_p)]<\EE[\chi(G_p)]$ for all $p \in (0,1)$.
\end{lemma}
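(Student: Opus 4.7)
The plan is to prove this via a monotone coupling of $H_p$ and $G_p$: I would construct both random subgraphs on a common probability space so that $H_p$ is deterministically a subgraph of $G_p$, deduce the pointwise inequality $\chi(H_p) \leq \chi(G_p)$, and then upgrade the resulting weak expected inequality to a strict one by exhibiting a single event of positive probability on which the chromatic-number gap is at least $1$.

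Concretely, I would introduce independent Bernoulli$(p)$ random variables $\{X_f\}_{f \in E(G)}$ and define both $H_p$ and $G_p$ by retaining edge $f$ iff $X_f = 1$ (in $H$ and in $G$ respectively). Under this coupling $H_p \subseteq G_p$ deterministically, so any proper coloring of $G_p$ restricts to a proper coloring of $H_p$; this gives $\chi(H_p) \leq \chi(G_p)$ on every sample point and hence the weak expected inequality for free.

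For strictness, the hypothesis that $H$ is a proper subgraph supplies some edge $e \in E(G) \setminus E(H)$, and I would focus on the event
\[
A = \{X_e = 1\} \cap \bigcap_{f \in E(H)} \{X_f = 0\},
\]
which by independence has probability exactly $p(1-p)^{|E(H)|} > 0$. On $A$ the graph $H_p$ is edgeless (so $\chi(H_p) \leq 1$), while $G_p$ contains $e$ (so $\chi(G_p) \geq 2$). Combining the gap of at least $1$ on $A$ with the pointwise weak inequality on $A^c$ yields $\EE[\chi(G_p)] - \EE[\chi(H_p)] \geq \mathbb{P}(A) > 0$, which is exactly the strict inequality sought.

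The argument is essentially just a coupling plus a one-line event, so no serious obstacle stands in the way. The only mild subtlety is the degenerate case in which $H$ and $G$ have the same edge set but $G$ has extra isolated vertices: there is then no separating edge $e$, but the chromatic numbers coincide pointwise under the coupling anyway, so this case is implicitly excluded by the context in which the lemma will be applied (distinguishing genuinely different subgraphs).
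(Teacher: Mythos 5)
Your proposal is correct and follows essentially the same route as the paper: a monotone coupling giving $H_p \subseteq G_p$ pointwise, followed by a positive-probability event forcing strictness. Your event $A$ (all edges of $H$ absent, one edge $e \in E(G)\setminus E(H)$ present) is just an explicit quantification of the paper's remark that strict inequality occurs when $H_p$ is edgeless but $G_p$ is not, and your side note about edge-identical proper subgraphs reflects the same implicit assumption the paper makes.
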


\begin{proof}[Proof of Lemma \ref{subgraph lemma}]
For this, we couple $H_p$ and $G_p$ by first sampling the edges of $H$ and then sampling the remaining edges of $G$.  In this coupling we have $H_p \subseteq G_p$ implying $\chi(H_p) \leq \chi(G_p)$.  Moreover, strict inequality is possible (e.g., if $H_p$ does not have any edges but $G_p$ does).
\end{proof}

\begin{proof}[Proof of Proposition \ref{3-minimizing}]
Since every graph with chromatic number at least $3$ contains an odd cycle, we need only consider odd cycles in determining which graphs are $3$-minimizers.  Letting $C^{2k+1}$ denote the odd cycle on $2k+1$ vertices, we have
\[
\mathbb{E}[\chi(C^{2k+1} _p)] = 2 + p^{2k+1} - (1-p)^{2k+1}.
\]
For $0 < p < \frac{1}{2}$, this is minimized when $k = 3$.  When $p = 1/2$, this quanity is $2$ independent of $k$.  And for $p \in (\frac{1}{2}, 1)$, this quantity converges to $2$ from above as $k \to \infty$.
\end{proof}

In light of this, (and the four-color theorem for planar graphs) to finish the proof of Theorem \ref{planar-bukh}, we need only prove that for $p \leq \frac{1}{2}$, $K_4$ is the unique $4$-minimizer among all planar graphs.

\begin{proposition}\label{k4 planar minimizer}
For all $p \in (0, \frac{1}{2}]$, $K_4$ is the unique 4-minimizer among planar graphs.
\end{proposition}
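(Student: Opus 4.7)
The plan is to apply Lemma~\ref{subgraph lemma} to reduce to the edge-critical case, then split the difference $\mathbb{E}[\chi(G_p)] - \mathbb{E}[\chi((K_4)_p)]$ according to the level sets $\{\chi \geq k\}$ for $k = 2,3,4$. First, because Lemma~\ref{subgraph lemma} implies that $\mathbb{E}[\chi(\cdot)_p]$ strictly decreases when edges are deleted, it suffices to prove the desired bound when $G$ is $4$-edge-critical and planar; equality trivially holds when $G = K_4$, so assume $G \neq K_4$. Since $K_4$ is itself $4$-critical, $G$ cannot properly contain $K_4$ as a subgraph (deleting any edge outside such a $K_4$ would preserve $\chi = 4$, contradicting criticality), so $G$ is $K_4$-free. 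Using that $k$-critical graphs have minimum degree $\geq k-1$, together with a short check that no $4$-critical planar graph exists on five vertices, we obtain $|V(G)| \geq 6$ and $m := |E(G)| \geq 9$. Moreover, by Aksionov's refinement of Gr\"otzsch's theorem---every planar graph with at most three triangles is $3$-colorable---$G$ contains at least four triangles.

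Next, I would write $\mathbb{E}[\chi(H_p)] = 1 + \sum_{k=2}^{4} \mathbb{P}(\chi(H_p) \geq k)$ for each $H \in \{G, K_4\}$, and set $\Delta_k := \mathbb{P}(\chi(G_p) \geq k) - \mathbb{P}(\chi((K_4)_p) \geq k)$. Edge-criticality of both $G$ and $K_4$ forces $\{\chi(H_p) = 4\} = \{H_p = H\}$, yielding the explicit formulas
\[
\Delta_2 = (1-p)^6 - (1-p)^m, \qquad \Delta_4 = p^m - p^6,
\]
and hence
\[
\Delta_2 + \Delta_4 = \bigl[(1-p)^6 - p^6\bigr] - \bigl[(1-p)^m - p^m\bigr].
\]
Setting $f(k) := (1-p)^k - p^k$, the telescoping identity $f(k+1) - f(k) = p(1-p)\bigl[p^{k-1} - (1-p)^{k-1}\bigr]$ shows $f$ is nonincreasing in $k$ on $(0, 1/2]$ and identically zero at $p = 1/2$. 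Since $m > 6$, this yields $\Delta_2 + \Delta_4 \geq 0$ throughout $(0, 1/2]$, with equality only at $p = 1/2$.

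It remains to analyze $\Delta_3 = \mathbb{P}(G_p \text{ is non-bipartite}) - \mathbb{P}((K_4)_p \text{ contains a triangle})$, which is the combinatorial heart of the proof and where I expect the appendix casework to live. The natural approach is to leverage the at-least-four triangles of $G$ from Aksionov: whenever the triangle configuration in $G$ is sufficiently spread out (for instance some pair is edge-disjoint), a direct inclusion-exclusion calculation gives $\mathbb{P}(G_p \text{ contains a triangle}) \geq \mathbb{P}((K_4)_p \text{ contains a triangle})$ already for all $p \in (0, 1]$, which suffices. The challenging case---and the main obstacle---is the extremal ``book''-like configuration in which all triangles of $G$ share a common edge $e$, where the triangle probability in $G_p$ can actually fall strictly below that of $(K_4)_p$. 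In that case, however, $G - e$ is a planar triangle-free graph with $\chi(G - e) = 3$ by $4$-criticality of $G$, so $G - e$ must contain a non-triangular odd cycle; a direct probabilistic estimate then shows this additional odd cycle contributes enough extra probability to $\mathbb{P}(G_p \text{ non-bipartite})$ to close the deficit for $p \leq 1/2$. Combined with the strict positivity of $\Delta_2 + \Delta_4$ on $(0, 1/2)$, this produces strict inequality for $G \neq K_4$ throughout $(0, 1/2]$, completing the proof.
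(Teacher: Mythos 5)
Your reduction to $4$-critical planar graphs via Lemma~\ref{subgraph lemma}, and your computation of $\Delta_2+\Delta_4$, are correct (and the telescoping observation is a nice touch), but note that at $p=\tfrac12$ this sum vanishes identically, so the whole burden falls on $\Delta_3$ exactly at the endpoint --- and that is where your sketch has a genuine gap, precisely in the case you flag as the main obstacle. In the ``book'' case (all triangles of $G$ containing a common edge $e$; the dangerous situation is exactly four pages), the shortfall in triangle probability against $K_4$ is $\bigl(4p^3-6p^5+3p^6\bigr)-p\bigl(1-(1-p^2)^4\bigr)=p^6(3-4p+p^3)$, which equals $9/512$ at $p=\tfrac12$. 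Gr\"otzsch/Aksionov only guarantees \emph{some} odd cycle $C$ of length $\ell\geq 5$ in $G-e$, with no control on $\ell$ or on how $C$ meets the pages. The natural disjoint-event bound (``$C$ present and $e$ absent'') contributes only $(1-p)p^{\ell}\leq(1-p)p^{5}=8/512<9/512$ at $p=\tfrac12$; a sharper conditioning does close the gap when $C$ avoids the pages, but drops back to $(1-p)p^{5}$ when $C$ uses both edges of a page, and if the shortest odd cycle of $G-e$ has length $\ell\geq 7$ then even the full $\mathbb{P}(C\subseteq G_p)=p^{\ell}\leq 4/512$ cannot cover the deficit. So ``a direct probabilistic estimate closes the deficit for $p\leq 1/2$'' is not substantiated and fails in the worst-case parameters your hypotheses allow. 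Separately, your first claim --- that any four triangles containing an edge-disjoint pair already force $\mathbb{P}(G_p\supseteq\text{triangle})\geq\mathbb{P}((K_4)_p\supseteq\text{triangle})$ for all $p$ --- is asserted rather than proved, and verifying it is essentially the finite configuration analysis that constitutes the bulk of the actual proof.

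For comparison, the paper proceeds by enumerating a finite family $\mathbb{F}_4$ of minimal planar four-triangle configurations (using the Gr\"unbaum--Aksionov theorem to guarantee four triangles), computing $\mathbb{E}[\chi(\cdot_p)]$ for each, and observing that the only configuration whose expectation falls below that of $K_4$ is the four-page book; it then disposes of that case \emph{structurally}, showing by a separating-triangle (interior/exterior) argument that no $4$-critical planar graph contains the four-page book at all. Your level-set decomposition is a genuinely different and cleaner bookkeeping device for the $k=2,4$ terms, but to complete it you would need either the paper's structural exclusion of the book configuration or substantially stronger information about the odd cycles of $G-e$ (their number and lengths); quoting Gr\"otzsch alone does not suffice.
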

\begin{proof}[Proof sketch]
Our proof relies on some rather involved case analysis, which we move to an appendix for ease of reading.  Here, we provide a very high-level proof sketch.

Our starting point is the Gr\"unbaum--Aksionov theorem that every planar graph with at most three 3-cycles is 3-colorable \cite{grunbaum, aksionov}.  From this, we construct a finite list of graphs that must be contained in any planar graph with chromatic number $4$ somewhat simplifying along the way for our purposes.  After this, we simply compare $K_4$ to this finite list of subgraphs and note that the expected chromatic number of $K_4$ is the greatest.  Full details available in the appendix.
\end{proof}

\section{Discussion of Theorem \ref{spectral} and a question of \cite{shinkar}}\label{section kneser}
\noindent Although the Hoffman bound is often a poor estimate for $\chi(G)$, there are nonetheless natural families of graphs for which our spectral result is the only known general result providing the bound of question \ref{bukh question}.  For example, we will present the \textit{Kneser graphs}, whose parameters are chosen so that none of the previously known bounds discussed in the introduction establishes Bukh's conjecture, yet Theorem \ref{spectral} does.

The \emph{Kneser graph} with parameters $n \geq k \geq 0$, denoted $KG_{n,k}$, is the graph whose vertices are indexed by the $k$-element subsets of $\{1, 2, \ldots , n\}$ and for which two vertices are adjacent iff the corresponding sets are disjoint.  In this language, the classic Erd\H{o}s--Ko--Rado theorem \cite{erdos} states for $n \geq 2k$, $\alpha(KG_{n,k}) = {n-1 \choose {k-1}}$, and a celebrated result of Lov\'asz \cite{lovasz} establishes $\chi(KG_{n,k}) = n-2k+2$.

It is well-known that the Kneser graphs are regular with $\lambda_{max} = \left( \substack{n-k \\ k} \right)$ and $\lambda_{min} = - \left( \substack{n-k-1 \\ k-1} \right)$ \cite{godsil}.  Thus, our spectral bound gives almost surely
\[
\chi((KG_{n,k})_p) \geq \frac{\lmax (G)}{- \lmin (G) + (C/p) (\sqrt{\Delta} + \sqrt{\log(|V|)})} = \frac{{{n-k} \choose k}}{{{n-k-1} \choose k-1} + (C/p) \left[ \sqrt{{{n-k} \choose k}} + \sqrt{\log {{n} \choose k}} \right]}.
\]
For $k \geq 3$ (to avoid trivialities), the denomonitor is dominated by the first term, which gives almost surely
\[
\chi((KG_{n,k})_p) \geq \frac{{{n-k} \choose k}}{(1+\varepsilon_p) {{n-k-1} \choose k-1}} = \frac{n-k}{(1+\varepsilon_p)k},
\]
for some $0 < \varepsilon_p$ tending to $0$ as $n \to \infty$.

For $k \ll n$ and $p$ fixed, this gives a lower bound on $\chi((KG_{n,k})_p)$, which is on the order of $n$, which asymptotically matches the trivial upper bound $\chi(KG_{n,k})$.  Thus, this establishes Bukh's conjecture for Kneser graphs in this regime, and for sufficiently small values of $k$ (e.g., $k\geq3$ fixed) ours is the only general bound able to do this.  Although, for Kneser graphs in particular, $\chi((KG_{n,k})_p)$ is already well-understood for a wide range of $p$ by completely different methods \cite{chiKneser}.

\vspace*{12pt}
Finally, we briefly turn our attention to a question of Shinkar, which we resolve negatively.  Hoping to use \eqref{shinkar equation} to resolve question \ref{bukh question} for all graphs, Shinkar  ~\cite{shinkar} asks the following:

\begin{question}[Shinkar]
Is it true that every graph $G$ contains an induced subgraph $G' \subset G$ such that $\chi(G') \geq c \cdot \chi(G)$, and $\a(G') \leq C \frac{|V(G')|}{\chi(G')}$ for some absolute constants $C,c > 0$?
\end{question}


The answer to this question is `no,' as shown by Kneser graphs.  Namely, Sudakov and Verstra\"ete ~\cite{sudakov} observe that if $H$ is any induced subgraph of $KG_{sk,k}$, then $|V(H)|/ \alpha(H) \leq s$.  This is because given $|V(H)|$ subsets of $\{1,2,\dots,sk\}$ of size $k$, by the pigeonhole principle there exists $i \in \{1,2,\dots,sk\}$ such that $i$ is contained in at least $k|V(H)|/sk$ of the sets of size $k$, and because these sets all intersect, the corresponding vertices form an indpendent set of size at least $|V(H)|/s$.  With this, we see that for sufficiently large $k$, the Kneser graphs $KG_{3k,k}$ provide an infinite family of counterexamples to Question 2.



\bibliographystyle{siam}
\bibliography{bibliography}

\begin{appendices}
\section{Appendix: Proof of Proposition \ref{k4 planar minimizer}}
\begin{proof}[Proof of Proposition \ref{k4 planar minimizer}]
We will start by categorizing a particular family of graphs.  Define $\mathbb{F}_4$ as a collection of graphs such that each $G \in \mathbb{F}_4$ has exactly 4 triangles and satisfies the following two conditions:

\textbf{Condition 1}: For every  triangle $T \subset G$ and every vertex $p \in V(T)$, either 
\begin{enumerate}
    \item $p$ is not contained in any other triangle, or
    \item $p$ is contained in another triangle, and $T$ intersects some triangle $T'$ in an edge containing $p$
\end{enumerate}

The motivation for this condition is that if $G$ fails it, we can ``separate'' $G$ at $p$ as shown below, preserving the number of triangles, and leading to a graph $G'$ whose subgraph has a lower expected chromatic number (For every subgraph $H\subset G$, there is an equivalent subgraph $H' \subset G'$ obtained by separating $H$ at the same vertices where we split $G$.  Clearly, then any coloring of the vertices on $H$ can be copied onto $H'$, where separated vertices both share the same color as the original vertex.)

\begin{center}
\begin{tikzpicture}[node distance=1.7cm]
\node(1)[circle, draw, fill=blue] at (0,-.75) {};
\node(2)[circle, draw, fill=red] at (2,0) {p};
\node(3)[circle, draw, fill=green] at (0,.75) {};
\node(4)[circle, draw, fill=green] at (4,.75) {};
\node(5)[circle, draw, fill=blue] at (4,-.75) {};

\node(A)[] at (5,0) {};
\node(B)[] at (6,0) {};

\path (1) edge (2)
  (2) edge (3)
  (3) edge (1)
  (2) edge (4)
  (2) edge (5)
  (4) edge (5)
  ;
  
\draw[->, line width=.5mm] (A) edge (B);

\node(6)[circle, draw, fill=blue] at (7,-.75) {};
\node(7)[circle, draw, fill=red] at (9,0) {};
\node(8)[circle, draw, fill=green] at (7,.75) {};
\node(11)[circle, draw, fill=red] at (10,0) {};
\node(9)[circle, draw, fill=green] at (12,.75) {};
\node(10)[circle, draw, fill=blue] at (12,-.75) {};

\path (6) edge (7)
  (8) edge (7)
  (6) edge (8)
  (11) edge (10)
  (10) edge (9)
  (9) edge (11)
  ;
  
\end{tikzpicture}
\end{center}

\textbf{Condition 2}: For any graph $G \in \mathbb{F}_4$, $G$ has no proper subgraphs $H$ containing four triangles.
\vspace{.1cm}

As a result, we know that every edge in $G$ is an edge of some triangle of $G$.  Now consider the four triangles of $G$, $T_1$, $T_2$, $T_3$, and $T_4$.  $G$ is uniquely defined by how we identify the edges of each $T_i$ to the other triangles (remember, we just identify cannot identify individual points, as this may lead to a problem with our first condition).  Informally, we can construct $\mathbb{F}_4$ in the following manner: start with $T_1$, and let $\mathbb{G}_2$ be all the graphs obtained by identifying the edges of $T_2$ with the edges of $T_1$.  Construct $\mathbb{G}_3$ by identifying the edges of $T_3$ with the edges of each $G\in \mathbb{G}_2$.  Finally, construct $\mathbb{G}_4$ by identifying the edges of $T_4$ with the edges of each $G\in \mathbb{G}_3$.  There will certainly be graphs in $\mathbb{G}_4$ that do not have exactly four triangles; however, we can be sure that $\mathbb{F}_4 \subset \mathbb{G}_4$.

Some brief observations that will make our constructions of the $\mathbb{G}_i$ easier:
\begin{itemize}
    \item For any given triangles $T_i$ and $T_j$, we can only identify at most one of the edges from each triangle.  If $T_i$ and $T_j$ share two edges, they must share all three edges, and would therefore be the same triangle. We can ignore these cases, as we wish for the four $T_k$ to represent four distinct triangles in the identification graphs.
    \item The process of identifying the edges of the $T_i$ in turn is commutative.  Therefore, if our final graph has $n$ components, we can choose the order of identification such that if the edges of $T_j$ are not identified to any $T_i$ for $i < j$, then the edges of all $T_k$, $k > j$ will also not be identified to any $T_i$.  In other words, we can always choose to have $T_j$'s edges only be identified to the edges of exactly one component of $H \in \mathbb{G}_{j-1}$
\end{itemize}

Now, we can start constructing $\mathbb{G}_1$, $\mathbb{G}_2$, $\mathbb{G}_3$ and $\mathbb{G}_4$.  The first two are trivial.

\vspace{.5cm}
\fbox{\begin{minipage}{1.1em}
$\mathbb{G}_1$
\end{minipage}}

\begin{center}
\begin{tikzpicture}[node distance = .6cm]
\node(1)[circle, draw, fill=pink] at (-.2,0) {};
\node(2)[circle, draw, fill=pink][below of = 1]{};
\node(3)[circle, draw, fill=pink][right of = 2]{};

\draw(1)[magenta, very thick]--(2);
\draw(2)[magenta, very thick]--(3);
\draw(3)[magenta, very thick]--(1);

\node[draw,align=left] at (0,-1.4) {$A_1$};

\end{tikzpicture}
\end{center}

\vspace{.5cm}
\fbox{\begin{minipage}{1.1em}
$\mathbb{G}_2$
\end{minipage}}
\begin{center}
\begin{tikzpicture}[node distance = .6cm]
\node(1)[circle, draw, fill=pink] at (-.2,0) {};
\node(2)[circle, draw, fill=pink][below of = 1]{};
\node(3)[circle, draw, fill=pink][right of = 2]{};
\node(13)[right of = 1]{};
\node(4)[circle, draw, fill=pink][right of = 13]{};
\node(5)[circle, draw, fill=pink][below of = 4]{};
\node(6)[circle, draw, fill=pink][right of = 5]{};

\draw(1)[magenta, very thick]--(2);
\draw(2)[magenta, very thick]--(3);
\draw(3)[magenta, very thick]--(1);
\draw(4)[magenta, very thick]--(5);
\draw(5)[magenta, very thick]--(6);
\draw(6)[magenta, very thick]--(4);

\node[draw,align=left] at (.7,-1.4) {$B_1$};

\end{tikzpicture}
\hspace{.6cm}
\begin{tikzpicture}[node distance = .6cm]
\node(13){};
\node(3)[circle, draw, fill=pink][below of = 13]{};
\node(4)[circle, draw, fill=pink][right of = 13]{};
\node(5)[circle, draw, fill=pink][below of = 4]{};
\node(6)[circle, draw, fill=pink][right of = 5]{};

\draw(3)[magenta, very thick]--(4);
\draw(3)[magenta, very thick]--(5);
\draw(4)[magenta, very thick]--(5);
\draw(5)[magenta, very thick]--(6);
\draw(6)[magenta, very thick]--(4);

\node[draw,align=left] at (.7,-1.4) {$B_2$};

\end{tikzpicture}
\end{center}

\vspace{.5cm}
\fbox{\begin{minipage}{1.1em}
$\mathbb{G}_3$
\end{minipage}}
\vspace{.5cm}

Consider $B_1$.  We can construct exactly two distinct (up to isomorphism) child graphs, by either identifying none of the edges of $T_3$, or identifying one of the edges of $T_3$ to one of the component triangles.  We cannot do anything more, as this would result in identifying two edges to the same triangle or connecting to separate components:

\begin{center}
\begin{tikzpicture}[node distance = .6cm]
\node(1)[circle, draw, fill=pink] at (-.2,0) {};
\node(2)[circle, draw, fill=pink][below of = 1]{};
\node(3)[circle, draw, fill=pink][right of = 2]{};
\node(13)[right of = 1]{};
\node(4)[circle, draw, fill=pink][right of = 13]{};
\node(5)[circle, draw, fill=pink][below of = 4]{};
\node(6)[circle, draw, fill=pink][right of = 5]{};
\node(14)[right of = 4]{};
\node(7)[circle, draw, fill=pink][right of = 14]{};
\node(8)[circle, draw, fill=pink][below of = 7]{};
\node(9)[circle, draw, fill=pink][right of = 8]{};

\draw(1)[magenta, very thick]--(2);
\draw(2)[magenta, very thick]--(3);
\draw(3)[magenta, very thick]--(1);
\draw(4)[magenta, very thick]--(5);
\draw(5)[magenta, very thick]--(6);
\draw(6)[magenta, very thick]--(4);
\draw(7)[magenta, very thick]--(8);
\draw(8)[magenta, very thick]--(9);
\draw(9)[magenta, very thick]--(7);

\node[draw,align=left] at (1.3,-1.4) {$C_1$};

\end{tikzpicture}
\hspace{.6cm}
\begin{tikzpicture}[node distance = .6cm]
\node(13){};
\node(3)[circle, draw, fill=pink][below of = 13]{};
\node(4)[circle, draw, fill=pink][right of = 13]{};
\node(5)[circle, draw, fill=pink][below of = 4]{};
\node(6)[circle, draw, fill=pink][right of = 5]{};
\node(14)[right of = 4]{};
\node(7)[circle, draw, fill=pink][right of = 14]{};
\node(8)[circle, draw, fill=pink][below of = 7]{};
\node(9)[circle, draw, fill=pink][right of = 8]{};

\draw(3)[magenta, very thick]--(4);
\draw(3)[magenta, very thick]--(5);
\draw(4)[magenta, very thick]--(5);
\draw(5)[magenta, very thick]--(6);
\draw(6)[magenta, very thick]--(4);
\draw(7)[magenta, very thick]--(8);
\draw(8)[magenta, very thick]--(9);
\draw(9)[magenta, very thick]--(7);

\node[draw,align=left] at (1.3,-1.4) {$C_2$};

\end{tikzpicture}
\end{center}

Now consider the second graph of $\mathbb{G}_2$, $B_2$.  If we do not identify $T_3$ to any edge, then we get a graph isomorphic to $H_2$. If we identify one edge of $T_3$ to any of the four external edges, we will obtain the same graph (up to isomorphism):
\begin{center}
\begin{tikzpicture}[node distance = .7cm]
\node(13){};
\node(3)[circle, draw, fill=pink][below of = 13]{};
\node(4)[circle, draw, fill=pink] at (.95,0){};
\node(5)[circle, draw, fill=pink][right of = 3]{};
\node(6)[circle, draw, fill=pink][right of = 5]{};
\node(9)[circle, draw, fill=pink][right of = 6]{};

\draw(3)[magenta, very thick]--(4);
\draw(3)[magenta, very thick]--(5);
\draw(4)[magenta, very thick]--(5);
\draw(4)[magenta, very thick]--(9);
\draw(5)[magenta, very thick]--(6);
\draw(6)[magenta, very thick]--(4);
\draw(9)[magenta, very thick]--(6);

\node[draw,align=left] at (1.0,-1.4) {$C_3$};
\end{tikzpicture}
\end{center}
If we identify one edge of $T_3$ to the center edge of $B_1$, then we obtain the following graph:
\begin{center}
\begin{tikzpicture}[node distance = .8cm]
\node(13){};
\node(3)[circle, draw, fill=pink] at (0,-.4){};
\node(4)[circle, draw, fill=pink] at (.5,0){};
\node(5)[circle, draw, fill=pink][below of = 4]{};
\node(6)[circle, draw, fill=pink] at (.9,-.4){};
\node(9)[circle, draw, fill=pink] at (1.7,-.4){};

\draw(3)[magenta, very thick]--(4);
\draw(3)[magenta, very thick]--(5);
\draw(4)[magenta, very thick]--(5);
\draw(4)[magenta, very thick]--(9);
\draw(5)[magenta, very thick]--(6);
\draw(6)[magenta, very thick]--(4);
\draw(5)[magenta, very thick]--(9);

\node[draw,align=left] at (.7,-1.4) {$C_4$};
\end{tikzpicture}
\end{center}
Finally, suppose we identify two edges of $T_3$ to two edges of $B_2$.  We cannot identify them to two edges from the same triangle in $B_2$.  Therefore, we can identify neither of the two edges to the center edge of $B_2$, as any other edge would lie on the same triangle as the center edge.  Therefore, we have two options: identify two edge, one from each triangle, that are adjacent, or non adjacent. First, a larger visual:  

\begin{center}
\begin{tikzpicture}[node distance = 1cm]
\node(13){};
\node(3)[circle, draw, fill=yellow][below of = 13]{A};
\node(4)[circle, draw, fill=yellow][right of = 13]{B};
\node(5)[circle, draw, fill=yellow][below of = 4]{C};
\node(6)[circle, draw, fill=yellow][right of = 5]{D};

\node(7)[circle, draw, fill=yellow][right of = 6]{1};
\node(8)[circle, draw, fill=yellow][above of = 7]{2};
\node(9)[circle, draw, fill=yellow][right of = 7]{3};

\node[draw,align=left] at (3.5,-2) {$T_3$};
\node[draw,align=left] at (1,-2) {$B_2$};

\draw(3)[black, very thick]--(4);
\draw(3)[black, very thick]--(5);
\draw(4)[black, very thick]--(5);
\draw(5)[black, very thick]--(6);
\draw(6)[black, very thick]--(4);
\draw(7)[black, very thick]--(8);
\draw(8)[black, very thick]--(9);
\draw(7)[black, very thick]--(9);

\end{tikzpicture}
\end{center}

Without loss of generality, assume that the two edges we are identifying from $T_3$ are \{1,2\} and \{2,3\}.  Assume we identify these two edges with the edges \{A,B\} and \{B,D\}.  We must do this identification by identifying 2 to B, and we are left with the following graph:
\begin{center}
\begin{tikzpicture}[node distance = .6cm]
\node(13)[circle, draw, fill=pink] at (.5, 0) {};
\node(3)[circle, draw, fill=pink] at (0, -.8) {};
\node(4)[circle, draw, fill=pink] at (1, -.8) {};
\node(5)[circle, draw, fill=pink] at (.5, -.5) {};

\draw(3)[magenta, very thick]--(4);
\draw(3)[magenta, very thick]--(5);
\draw(3)[magenta, very thick]--(13);
\draw(4)[magenta, very thick]--(5);
\draw(5)[magenta, very thick]--(13);
\draw(13)[magenta, very thick]--(4);
\draw(5)[magenta, very thick]--(13);

\node[draw,align=left] at (.6,-1.4) {$C_5$};

\end{tikzpicture}
\end{center}

(Note, the graph $C_5$ has four triangles. However, one of these triangles is not identified to any of $T_1$, $T_2$, or $T_3$, but is composed of one edge from each.)

Now suppose that we identify the same two edges of $T_3$ to the edges \{A,B\} and \{C,D\}.  In whatever manner we choose to identify the individual vertices, we will have to identify either vertex C or D with vertex A or B.  This would necessarily result in the destruction of one of the previous triangles in $B_2$.  Therefore, we cannot identify any two edges from $T_3$ to non-adjacent edges on $B_2$.

Therefore, we have completely categorized $\mathbb{G}_3$.

\vspace{.5cm}
\fbox{\begin{minipage}{1.1em}
$\mathbb{G}_4$
\end{minipage}}
\vspace{.5cm}

In the same manner as before, we can generate the first two graphs of $\mathbb{G}_4$ from $C_1$:

\begin{center}
\begin{tikzpicture}[node distance = .6cm]
\node(1)[circle, draw, fill=pink] at (-.2,0) {};
\node(2)[circle, draw, fill=pink][below of = 1]{};
\node(3)[circle, draw, fill=pink][right of = 2]{};
\node(13)[right of = 1]{};
\node(4)[circle, draw, fill=pink][right of = 13]{};
\node(5)[circle, draw, fill=pink][below of = 4]{};
\node(6)[circle, draw, fill=pink][right of = 5]{};
\node(14)[right of = 4]{};
\node(7)[circle, draw, fill=pink][right of = 14]{};
\node(8)[circle, draw, fill=pink][below of = 7]{};
\node(9)[circle, draw, fill=pink][right of = 8]{};
\node(15)[right of = 7]{};
\node(10)[circle, draw, fill=pink][right of = 15]{};
\node(11)[circle, draw, fill=pink][below of = 10]{};
\node(12)[circle, draw, fill=pink][right of = 11]{};

\draw(1)[magenta, very thick]--(2);
\draw(2)[magenta, very thick]--(3);
\draw(3)[magenta, very thick]--(1);
\draw(4)[magenta, very thick]--(5);
\draw(5)[magenta, very thick]--(6);
\draw(6)[magenta, very thick]--(4);
\draw(7)[magenta, very thick]--(8);
\draw(8)[magenta, very thick]--(9);
\draw(9)[magenta, very thick]--(7);
\draw(10)[magenta, very thick]--(11);
\draw(11)[magenta, very thick]--(12);
\draw(12)[magenta, very thick]--(10);

\node[draw,align=left] at (1.8,-1.2) {$D_1$};

\end{tikzpicture}
\hspace{.6cm}
\begin{tikzpicture}[node distance = .6cm]
\node(13){};
\node(3)[circle, draw, fill=pink][below of = 13]{};
\node(4)[circle, draw, fill=pink][right of = 13]{};
\node(5)[circle, draw, fill=pink][below of = 4]{};
\node(6)[circle, draw, fill=pink][right of = 5]{};
\node(14)[right of = 4]{};
\node(7)[circle, draw, fill=pink][right of = 14]{};
\node(8)[circle, draw, fill=pink][below of = 7]{};
\node(9)[circle, draw, fill=pink][right of = 8]{};
\node(15)[right of = 7]{};
\node(10)[circle, draw, fill=pink][right of = 15]{};
\node(11)[circle, draw, fill=pink][below of = 10]{};
\node(12)[circle, draw, fill=pink][right of = 11]{};

\draw(3)[magenta, very thick]--(4);
\draw(3)[magenta, very thick]--(5);
\draw(4)[magenta, very thick]--(5);
\draw(5)[magenta, very thick]--(6);
\draw(6)[magenta, very thick]--(4);
\draw(7)[magenta, very thick]--(8);
\draw(8)[magenta, very thick]--(9);
\draw(9)[magenta, very thick]--(7);
\draw(10)[magenta, very thick]--(11);
\draw(11)[magenta, very thick]--(12);
\draw(12)[magenta, very thick]--(10);

\node[draw,align=left] at (1.8,-1.2) {$D_2$};

\end{tikzpicture}
\end{center}

Consider $C_2$.  If $T_4$ is disjoint, then we obtain the same graph as $D_2$.  By the same reasoning as before, if we identify one edge of $T_4$ to an edge of the triangular component of $C_2$, one edge to an external edge of the larger component of $C_2$, one edge to the internal edge of the larger component of $C_2$, or two edges (in the only way possible) to the larger component of $C_2$, we obtain, respectively:

\begin{center}
\begin{tikzpicture}[node distance = .8cm]
\node(13){};
\node(3)[circle, draw, fill=pink][below of = 13]{};
\node(4)[circle, draw, fill=pink][right of = 13]{};
\node(5)[circle, draw, fill=pink][below of = 4]{};
\node(6)[circle, draw, fill=pink][right of = 5]{};
\node(14)[right of = 4]{};

\node(8)[circle, draw, fill=pink][right of = 6]{};
\node(9)[circle, draw, fill=pink][right of = 8]{};
\node(7)[circle, draw, fill=pink][above of = 9]{};
\node(10)[circle, draw, fill=pink][right of = 9]{};

\draw(3)[magenta, very thick]--(4);
\draw(3)[magenta, very thick]--(5);
\draw(4)[magenta, very thick]--(5);
\draw(5)[magenta, very thick]--(6);
\draw(6)[magenta, very thick]--(4);
\draw(7)[magenta, very thick]--(8);
\draw(8)[magenta, very thick]--(9);
\draw(9)[magenta, very thick]--(7);
\draw(10)[magenta, very thick]--(9);
\draw(10)[magenta, very thick]--(7);

\node[draw,align=left] at (1.6,-1.5) {$D_3$};

\end{tikzpicture}
\hspace{.6cm}
\begin{tikzpicture}[node distance = .8cm]
\node(13){};
\node(3)[circle, draw, fill=pink][below of = 13]{};
\node(4)[circle, draw, fill=pink][right of = 13]{};
\node(5)[circle, draw, fill=pink][below of = 4]{};
\node(6)[circle, draw, fill=pink][right of = 5]{};
\node(9)[circle, draw, fill=pink][above of = 6]{};
\node(10)[circle, draw, fill=pink][right of = 6]{};
\node(11)[circle, draw, fill=pink][above of = 10]{};
\node(12)[circle, draw, fill=pink][right of = 10]{};

\draw(3)[magenta, very thick]--(4);
\draw(3)[magenta, very thick]--(5);
\draw(4)[magenta, very thick]--(5);
\draw(4)[magenta, very thick]--(9);
\draw(5)[magenta, very thick]--(6);
\draw(6)[magenta, very thick]--(4);
\draw(9)[magenta, very thick]--(6);
\draw(10)[magenta, very thick]--(11);
\draw(11)[magenta, very thick]--(12);
\draw(12)[magenta, very thick]--(10);

\node[draw,align=left] at (1.4,-1.5) {$D_4$};

\end{tikzpicture}

\vspace{.6cm}
\begin{tikzpicture}[node distance = .8cm]
\node(13){};
\node(3)[circle, draw, fill=pink] at (0,-.4){};
\node(4)[circle, draw, fill=pink] at (.5,0){};
\node(5)[circle, draw, fill=pink][below of = 4]{};
\node(6)[circle, draw, fill=pink] at (.9,-.4){};
\node(9)[circle, draw, fill=pink] at (1.7,-.4){};
\node(10)[circle, draw, fill=pink] at (2.3, -.8){};
\node(11)[circle, draw, fill=pink][above of = 10]{};
\node(12)[circle, draw, fill=pink][right of = 10]{};

\draw(3)[magenta, very thick]--(4);
\draw(3)[magenta, very thick]--(5);
\draw(4)[magenta, very thick]--(5);
\draw(4)[magenta, very thick]--(9);
\draw(5)[magenta, very thick]--(6);
\draw(6)[magenta, very thick]--(4);
\draw(5)[magenta, very thick]--(9);
\draw(10)[magenta, very thick]--(11);
\draw(11)[magenta, very thick]--(12);
\draw(12)[magenta, very thick]--(10);

\node[draw,align=left] at (1.4,-1.4) {$D_5$};
\end{tikzpicture}
\hspace{.5cm}
\begin{tikzpicture}[node distance = .8cm]
\node(13)[circle, draw, fill=pink] at (.5, 0) {};
\node(3)[circle, draw, fill=pink] at (0, -.8) {};
\node(4)[circle, draw, fill=pink] at (1, -.8) {};
\node(5)[circle, draw, fill=pink] at (.5, -.5) {};

\node(7)[circle, draw, fill=pink] at (1.7, -.8) {};
\node(6)[circle, draw, fill=pink][above of = 7]{};
\node(8)[circle, draw, fill=pink][right of = 7]{};

\draw(3)[magenta, very thick]--(4);
\draw(3)[magenta, very thick]--(5);
\draw(3)[magenta, very thick]--(13);
\draw(4)[magenta, very thick]--(5);
\draw(5)[magenta, very thick]--(13);
\draw(13)[magenta, very thick]--(4);
\draw(5)[magenta, very thick]--(13);
\draw(6)[magenta, very thick]--(7);
\draw(7)[magenta, very thick]--(8);
\draw(8)[magenta, very thick]--(6);

\node[draw,align=left] at (1.4,-1.4) {$D_6$};

\end{tikzpicture}

\end{center}

Consider $C_3$.  If we add a disjoint triangle, we will obtain $D_4$ again.  If we identify one edge of $T_4$ to the exterior edge of $C_3$ lying in the center triangle, to one of other exterior edges, or to an interior edge of $C_3$, we will obtain, respectively:

\begin{center}
\begin{tikzpicture}[node distance = .6cm]
\node(13)[circle, draw, fill=pink] at (.7, 0) {};
\node(3)[circle, draw, fill=pink] at (2, 0) {};
\node(4)[circle, draw, fill=pink] at (1.35, 0) {};
\node(5)[circle, draw, fill=pink] at (1, -.5) {};
\node(6)[circle, draw, fill=pink] at (1.7, -.5) {};
\node(9)[circle, draw, fill=pink] at (1.35, -1) {};

\draw(3)[magenta, very thick]--(4);
\draw(3)[magenta, very thick]--(6);
\draw(4)[magenta, very thick]--(5);
\draw(6)[magenta, very thick]--(9);
\draw(5)[magenta, very thick]--(6);
\draw(6)[magenta, very thick]--(4);
\draw(9)[magenta, very thick]--(5);
\draw(13)[magenta, very thick]--(4);
\draw(5)[magenta, very thick]--(13);

\node[draw,align=left] at (1.4,-1.7) {$D_7$};

\end{tikzpicture}
\hspace{.5cm}
\begin{tikzpicture}[node distance = .6cm]
\node(13) at (.5,0) {};
\node(3)[circle, draw, fill=pink][below of = 13]{};
\node(4)[circle, draw, fill=pink][right of = 13]{};
\node(5)[circle, draw, fill=pink][below of = 4]{};
\node(6)[circle, draw, fill=pink][right of = 5]{};
\node(9)[circle, draw, fill=pink][above of = 6]{};
\node(12)[circle, draw, fill=pink][right of = 6]{};

\draw(3)[magenta, very thick]--(4);
\draw(3)[magenta, very thick]--(5);
\draw(4)[magenta, very thick]--(5);
\draw(4)[magenta, very thick]--(9);
\draw(5)[magenta, very thick]--(6);
\draw(6)[magenta, very thick]--(4);
\draw(9)[magenta, very thick]--(6);
\draw(12)[magenta, very thick]--(6);
\draw(12)[magenta, very thick]--(9);

\node[draw,align=left] at (1.4,-1.2) {$D_8$};

\end{tikzpicture}
\hspace{.5cm}
\begin{tikzpicture}[node distance = .6cm]
\node(13)[circle, draw, fill=pink] at (-.5, 0) {};
\node(3)[circle, draw, fill=pink][below of = 13]{};
\node(4)[circle, draw, fill=pink][right of = 13]{};
\node(5)[circle, draw, fill=pink][below of = 4]{};
\node(6)[circle, draw, fill=pink][right of = 5]{};
\node(9)[circle, draw, fill=pink][above of = 6]{};

\draw(3)[magenta, very thick]--(4);
\draw(3)[magenta, very thick]--(5);
\draw(4)[magenta, very thick]--(5);
\draw(4)[magenta, very thick]--(9);
\draw(5)[magenta, very thick]--(6);
\draw(6)[magenta, very thick]--(4);
\draw(9)[magenta, very thick]--(5);
\draw(13)[magenta, very thick]--(4);
\draw(3)[magenta, very thick]--(13);

\node[draw,align=left] at (0,-1.2) {$D_9$};

\end{tikzpicture}
\end{center}

Suppose we identify two edges of $T_4$ to $C_3$:

\begin{center}
\begin{tikzpicture}[node distance = 1cm]
\node(13){};
\node(3)[circle, draw, fill=yellow][below of = 13]{A};
\node(4)[circle, draw, fill=yellow] at (.95,0){B};
\node(5)[circle, draw, fill=yellow][right of = 3]{C};
\node(6)[circle, draw, fill=yellow][right of = 5]{D};
\node(9)[circle, draw, fill=yellow][right of = 6]{E};

\draw(3)[black, very thick]--(4);
\draw(3)[black, very thick]--(5);
\draw(4)[black, very thick]--(5);
\draw(4)[black, very thick]--(9);
\draw(5)[black, very thick]--(6);
\draw(6)[black, very thick]--(4);
\draw(9)[black, very thick]--(6);

\node(10)[circle, draw, fill=yellow][right of = 9]{1};
\node(11)[circle, draw, fill=yellow][above of = 10]{2};
\node(12)[circle, draw, fill=yellow][right of = 10]{3};

\draw(10)[black, very thick]--(11);
\draw(11)[black, very thick]--(12);
\draw(12)[black, very thick]--(10);

\node[draw,align=left] at (4.5,-2) {$T_4$};
\node[draw,align=left] at (1,-2) {$C_3$};

\end{tikzpicture}
\end{center}

Without loss of generality let the two edges of $T_4$ be \{1,2\} and \{2,3\}.  Our two edges on $C_3$ either i) lie on a subgraph isomorphic to $B_2$ or ii) one edge is \{A,B\} or \{A,C\}, and the other is \{B,E\} or \{D,E\}.  In case i), WLOG let the subgraph of $C_3$ be the induced subgraph on vertices C, B, D, and E.  We know from our earlier argument that we must either identify \{1,2\} and \{2,3\} to \{C,B\} and \{B,E\}, or \{C,D\} and \{D,E\}.  With either choice, we obtain the graph:

\begin{center}
\begin{tikzpicture}[node distance = .6cm]
\node(13)[circle, draw, fill=pink] at (0, .25) {};
\node(3)[circle, draw, fill=pink] at (0, -.8) {};
\node(4)[circle, draw, fill=pink] at (1, -.8) {};
\node(5)[circle, draw, fill=pink] at (.25, -.4) {};

\node(6)[circle, draw, fill=pink][left of = 3]{};

\draw(3)[magenta, very thick]--(4);
\draw(3)[magenta, very thick]--(5);
\draw(3)[magenta, very thick]--(13);
\draw(4)[magenta, very thick]--(5);
\draw(5)[magenta, very thick]--(13);
\draw(13)[magenta, very thick]--(4);
\draw(5)[magenta, very thick]--(13);

\draw(6)[magenta, very thick]--(3);
\draw(6)[magenta, very thick]--(13);

\node[draw,align=left] at (.3,-1.4) {$D_{10}$};

\end{tikzpicture}
\end{center}

In case ii), if we identify \{1,2\} to \{E,B\} and \{2,3\} to \{B,A\}, we obtain the following graph:
\begin{center}
\begin{tikzpicture}[node distance = .6cm]
\node(1)[circle, draw, fill=pink] at (1, .2) {};
\node(2)[circle, draw, fill=pink] at (1.8, .2) {};
\node(5)[circle, draw, fill=pink] at (1.4, -.2) {};
\node(4)[circle, draw, fill=pink] at (1, -.6) {};
\node(3)[circle, draw, fill=pink] at (1.8, -.6) {};

\draw(1)[magenta, very thick]--(2);
\draw(2)[magenta, very thick]--(3);
\draw(3)[magenta, very thick]--(4);
\draw(4)[magenta, very thick]--(1);
\draw(1)[magenta, very thick]--(5);
\draw(2)[magenta, very thick]--(5);
\draw(3)[magenta, very thick]--(5);
\draw(4)[magenta, very thick]--(5);

\node[draw,align=left] at (1.4,-1.2) {$D_{11}$};

\end{tikzpicture}
\end{center}

If we identify \{1,2\} to \{A,B\} and \{2,3\} to \{D,E\} (equivalent to identifying \{1,2\} to \{A,C\} and \{2,3\} to \{B,E\}), then we must identify vertex 2 to A and E (otherwise, we will collapse two triangles).  At this point, the graph we will obtain will contain a copy of $K_4$, so we need not consider it for $\mathbb{F}_4$.

If we identify \{1,2\} to \{A,C\} and \{2,3\} to \{D,E\}, then we must identify vertex 2 to A and E again.  By the same logic as before, we can ignore this graph.

Consider the graph $C_4$.  If $T_4$ is disjoint, then we obtain $D_5$.  If we identify one edge of $T_4$ to one of the exterior six edges, we obtain $D_9$.  If we identify the edge of $T_4$ to the central edge shared by the other three triangles, we obtain the graph:

\begin{center}
\begin{tikzpicture}[node distance = .6cm]
\node(13)[circle, draw, fill=pink] at (.7, 0) {};
\node(3)[circle, draw, fill=pink][below of = 13]{};
\node(4)[circle, draw, fill=pink][right of = 13]{};
\node(5)[circle, draw, fill=pink][below of = 4]{};
\node(6)[circle, draw, fill=pink][right of = 5]{};
\node(9)[circle, draw, fill=pink][above of = 6]{};

\draw(3)[magenta, very thick]--(4);
\draw(3)[magenta, very thick]--(5);
\draw(4)[magenta, very thick]--(5);
\draw(4)[magenta, very thick]--(9);
\draw(5)[magenta, very thick]--(6);
\draw(6)[magenta, very thick]--(4);
\draw(9)[magenta, very thick]--(5);
\draw(13)[magenta, very thick]--(4);
\draw(5)[magenta, very thick]--(13);

\node[draw,align=left] at (1.4,-1.2) {$D_{12}$};
\end{tikzpicture}
\end{center}

If we identify two edges of $T_4$, then we must identify them to edges lying in a subgraph of $C_4$ isomorphic to $B_2$.  Therefore, the graph we obtain must have a copy of $K_4$.

Finally, consider $C_5$.  Identifying edges in $T_4$ to edge in $C_5$ in such a way such that we preserve the uniqueness of all the triangles will force us to have a copy of $K_4$ in our resulting graph.  Therefore, we can ignore these graphs.  For formality's sake, we should identify $T_4$ with the triangle in $C_5$ comprised of one edge from each of the previous three $T_i$.  We then obtain the graph $D_{13}$ (which is the same graph as $C_5$):

\begin{center}
\begin{tikzpicture}[node distance = .6cm]
\node(13)[circle, draw, fill=pink] at (.5, 0) {};
\node(3)[circle, draw, fill=pink] at (0, -.8) {};
\node(4)[circle, draw, fill=pink] at (1, -.8) {};
\node(5)[circle, draw, fill=pink] at (.5, -.5) {};

\draw(3)[magenta, very thick]--(4);
\draw(3)[magenta, very thick]--(5);
\draw(3)[magenta, very thick]--(13);
\draw(4)[magenta, very thick]--(5);
\draw(5)[magenta, very thick]--(13);
\draw(13)[magenta, very thick]--(4);
\draw(5)[magenta, very thick]--(13);

\node[draw,align=left] at (.6,-1.4) {$D_{13}$};

\end{tikzpicture}
\end{center}

Now that we have categorized $\mathbb{G}_4$, we can pull $\mathbb{F}_4$ as a subset from these graphs.  We see that $D_6$ and $D_{10}$ contain copies of $D_{13}$ ($K_4$).  However, every other graph satisfies the properties for $\mathbb{F}_4$.  Therefore, the graphs of $\mathbb{F}_4$ are as follows:

\begin{center}

\begin{tikzpicture}[node distance = .6cm]
\node(1)[circle, draw, fill=pink] at (-.2,0) {};
\node(2)[circle, draw, fill=pink][below of = 1]{};
\node(3)[circle, draw, fill=pink][right of = 2]{};
\node(13)[right of = 1]{};
\node(4)[circle, draw, fill=pink][right of = 13]{};
\node(5)[circle, draw, fill=pink][below of = 4]{};
\node(6)[circle, draw, fill=pink][right of = 5]{};
\node(14)[right of = 4]{};
\node(7)[circle, draw, fill=pink][right of = 14]{};
\node(8)[circle, draw, fill=pink][below of = 7]{};
\node(9)[circle, draw, fill=pink][right of = 8]{};
\node(15)[right of = 7]{};
\node(10)[circle, draw, fill=pink][right of = 15]{};
\node(11)[circle, draw, fill=pink][below of = 10]{};
\node(12)[circle, draw, fill=pink][right of = 11]{};

\draw(1)[magenta, very thick]--(2);
\draw(2)[magenta, very thick]--(3);
\draw(3)[magenta, very thick]--(1);
\draw(4)[magenta, very thick]--(5);
\draw(5)[magenta, very thick]--(6);
\draw(6)[magenta, very thick]--(4);
\draw(7)[magenta, very thick]--(8);
\draw(8)[magenta, very thick]--(9);
\draw(9)[magenta, very thick]--(7);
\draw(10)[magenta, very thick]--(11);
\draw(11)[magenta, very thick]--(12);
\draw(12)[magenta, very thick]--(10);

\node[draw,align=left] at (1.8,-1.2) {$D_1$};

\end{tikzpicture}
\hspace{.6cm}
\begin{tikzpicture}[node distance = .6cm]
\node(13){};
\node(3)[circle, draw, fill=pink][below of = 13]{};
\node(4)[circle, draw, fill=pink][right of = 13]{};
\node(5)[circle, draw, fill=pink][below of = 4]{};
\node(6)[circle, draw, fill=pink][right of = 5]{};
\node(14)[right of = 4]{};
\node(7)[circle, draw, fill=pink][right of = 14]{};
\node(8)[circle, draw, fill=pink][below of = 7]{};
\node(9)[circle, draw, fill=pink][right of = 8]{};
\node(15)[right of = 7]{};
\node(10)[circle, draw, fill=pink][right of = 15]{};
\node(11)[circle, draw, fill=pink][below of = 10]{};
\node(12)[circle, draw, fill=pink][right of = 11]{};

\draw(3)[magenta, very thick]--(4);
\draw(3)[magenta, very thick]--(5);
\draw(4)[magenta, very thick]--(5);
\draw(5)[magenta, very thick]--(6);
\draw(6)[magenta, very thick]--(4);
\draw(7)[magenta, very thick]--(8);
\draw(8)[magenta, very thick]--(9);
\draw(9)[magenta, very thick]--(7);
\draw(10)[magenta, very thick]--(11);
\draw(11)[magenta, very thick]--(12);
\draw(12)[magenta, very thick]--(10);

\node[draw,align=left] at (1.8,-1.2) {$D_2$};

\end{tikzpicture}

\vspace{.5cm}
\begin{tikzpicture}[node distance = .6cm]
\node(13){};
\node(3)[circle, draw, fill=pink][below of = 13]{};
\node(4)[circle, draw, fill=pink][right of = 13]{};
\node(5)[circle, draw, fill=pink][below of = 4]{};
\node(6)[circle, draw, fill=pink][right of = 5]{};
\node(14)[right of = 4]{};

\node(8)[circle, draw, fill=pink][right of = 6]{};
\node(9)[circle, draw, fill=pink][right of = 8]{};
\node(7)[circle, draw, fill=pink][above of = 9]{};
\node(10)[circle, draw, fill=pink][right of = 9]{};

\draw(3)[magenta, very thick]--(4);
\draw(3)[magenta, very thick]--(5);
\draw(4)[magenta, very thick]--(5);
\draw(5)[magenta, very thick]--(6);
\draw(6)[magenta, very thick]--(4);
\draw(7)[magenta, very thick]--(8);
\draw(8)[magenta, very thick]--(9);
\draw(9)[magenta, very thick]--(7);
\draw(10)[magenta, very thick]--(9);
\draw(10)[magenta, very thick]--(7);

\node[draw,align=left] at (1.8,-1.2) {$D_3$};

\end{tikzpicture}
\hspace{.5cm}
\begin{tikzpicture}[node distance = .6cm]
\node(13){};
\node(3)[circle, draw, fill=pink][below of = 13]{};
\node(4)[circle, draw, fill=pink][right of = 13]{};
\node(5)[circle, draw, fill=pink][below of = 4]{};
\node(6)[circle, draw, fill=pink][right of = 5]{};
\node(9)[circle, draw, fill=pink][above of = 6]{};
\node(10)[circle, draw, fill=pink][right of = 6]{};
\node(11)[circle, draw, fill=pink][above of = 10]{};
\node(12)[circle, draw, fill=pink][right of = 10]{};

\draw(3)[magenta, very thick]--(4);
\draw(3)[magenta, very thick]--(5);
\draw(4)[magenta, very thick]--(5);
\draw(4)[magenta, very thick]--(9);
\draw(5)[magenta, very thick]--(6);
\draw(6)[magenta, very thick]--(4);
\draw(9)[magenta, very thick]--(6);
\draw(10)[magenta, very thick]--(11);
\draw(11)[magenta, very thick]--(12);
\draw(12)[magenta, very thick]--(10);

\node[draw,align=left] at (1.4,-1.2) {$D_4$};

\end{tikzpicture}
\hspace{.5cm}
\begin{tikzpicture}[node distance = .8cm]
\node(13){};
\node(3)[circle, draw, fill=pink] at (0,-.4){};
\node(4)[circle, draw, fill=pink] at (.5,0){};
\node(5)[circle, draw, fill=pink][below of = 4]{};
\node(6)[circle, draw, fill=pink] at (.9,-.4){};
\node(9)[circle, draw, fill=pink] at (1.7,-.4){};
\node(10)[circle, draw, fill=pink] at (2.3, -.8){};
\node(11)[circle, draw, fill=pink][above of = 10]{};
\node(12)[circle, draw, fill=pink][right of = 10]{};

\draw(3)[magenta, very thick]--(4);
\draw(3)[magenta, very thick]--(5);
\draw(4)[magenta, very thick]--(5);
\draw(4)[magenta, very thick]--(9);
\draw(5)[magenta, very thick]--(6);
\draw(6)[magenta, very thick]--(4);
\draw(5)[magenta, very thick]--(9);
\draw(10)[magenta, very thick]--(11);
\draw(11)[magenta, very thick]--(12);
\draw(12)[magenta, very thick]--(10);

\node[draw,align=left] at (.7,-1.4) {$D_5$};
\end{tikzpicture}

\vspace{.7cm}

\begin{tikzpicture}[node distance = .6cm]
\node(13)[circle, draw, fill=pink] at (.7, 0) {};
\node(3)[circle, draw, fill=pink] at (2, 0) {};
\node(4)[circle, draw, fill=pink] at (1.35, 0) {};
\node(5)[circle, draw, fill=pink] at (1, -.5) {};
\node(6)[circle, draw, fill=pink] at (1.7, -.5) {};
\node(9)[circle, draw, fill=pink] at (1.35, -1) {};

\draw(3)[magenta, very thick]--(4);
\draw(3)[magenta, very thick]--(6);
\draw(4)[magenta, very thick]--(5);
\draw(6)[magenta, very thick]--(9);
\draw(5)[magenta, very thick]--(6);
\draw(6)[magenta, very thick]--(4);
\draw(9)[magenta, very thick]--(5);
\draw(13)[magenta, very thick]--(4);
\draw(5)[magenta, very thick]--(13);

\node[draw,align=left] at (1.4,-1.7) {$D_7$};

\end{tikzpicture}
\hspace{.5cm}
\begin{tikzpicture}[node distance = .6cm]
\node(13) at (.5,0) {};
\node(3)[circle, draw, fill=pink][below of = 13]{};
\node(4)[circle, draw, fill=pink][right of = 13]{};
\node(5)[circle, draw, fill=pink][below of = 4]{};
\node(6)[circle, draw, fill=pink][right of = 5]{};
\node(9)[circle, draw, fill=pink][above of = 6]{};
\node(12)[circle, draw, fill=pink][right of = 6]{};

\draw(3)[magenta, very thick]--(4);
\draw(3)[magenta, very thick]--(5);
\draw(4)[magenta, very thick]--(5);
\draw(4)[magenta, very thick]--(9);
\draw(5)[magenta, very thick]--(6);
\draw(6)[magenta, very thick]--(4);
\draw(9)[magenta, very thick]--(6);
\draw(12)[magenta, very thick]--(6);
\draw(12)[magenta, very thick]--(9);

\node[draw,align=left] at (1.4,-1.2) {$D_8$};

\end{tikzpicture}
\hspace{.5cm}
\begin{tikzpicture}[node distance = .6cm]
\node(13)[circle, draw, fill=pink] at (-.5, 0) {};
\node(3)[circle, draw, fill=pink][below of = 13]{};
\node(4)[circle, draw, fill=pink][right of = 13]{};
\node(5)[circle, draw, fill=pink][below of = 4]{};
\node(6)[circle, draw, fill=pink][right of = 5]{};
\node(9)[circle, draw, fill=pink][above of = 6]{};

\draw(3)[magenta, very thick]--(4);
\draw(3)[magenta, very thick]--(5);
\draw(4)[magenta, very thick]--(5);
\draw(4)[magenta, very thick]--(9);
\draw(5)[magenta, very thick]--(6);
\draw(6)[magenta, very thick]--(4);
\draw(9)[magenta, very thick]--(5);
\draw(13)[magenta, very thick]--(4);
\draw(3)[magenta, very thick]--(13);

\node[draw,align=left] at (0,-1.2) {$D_9$};

\end{tikzpicture}

\vspace{.7cm}
\begin{tikzpicture}[node distance = .6cm]
\node(1)[circle, draw, fill=pink] at (1, .2) {};
\node(2)[circle, draw, fill=pink] at (1.8, .2) {};
\node(5)[circle, draw, fill=pink] at (1.4, -.2) {};
\node(4)[circle, draw, fill=pink] at (1, -.6) {};
\node(3)[circle, draw, fill=pink] at (1.8, -.6) {};

\draw(1)[magenta, very thick]--(2);
\draw(2)[magenta, very thick]--(3);
\draw(3)[magenta, very thick]--(4);
\draw(4)[magenta, very thick]--(1);
\draw(1)[magenta, very thick]--(5);
\draw(2)[magenta, very thick]--(5);
\draw(3)[magenta, very thick]--(5);
\draw(4)[magenta, very thick]--(5);

\node[draw,align=left] at (1.4,-1.2) {$D_{11}$};

\end{tikzpicture}
\hspace{.9cm}
\begin{tikzpicture}[node distance = .6cm]
\node(13)[circle, draw, fill=pink] at (.7, 0) {};
\node(3)[circle, draw, fill=pink][below of = 13]{};
\node(4)[circle, draw, fill=pink][right of = 13]{};
\node(5)[circle, draw, fill=pink][below of = 4]{};
\node(6)[circle, draw, fill=pink][right of = 5]{};
\node(9)[circle, draw, fill=pink][above of = 6]{};

\draw(3)[magenta, very thick]--(4);
\draw(3)[magenta, very thick]--(5);
\draw(4)[magenta, very thick]--(5);
\draw(4)[magenta, very thick]--(9);
\draw(5)[magenta, very thick]--(6);
\draw(6)[magenta, very thick]--(4);
\draw(9)[magenta, very thick]--(5);
\draw(13)[magenta, very thick]--(4);
\draw(5)[magenta, very thick]--(13);

\node[draw,align=left] at (1.4,-1.2) {$D_{12}$};

\end{tikzpicture}
\hspace{.9cm}
\begin{tikzpicture}[node distance = .6cm]
\node(13)[circle, draw, fill=pink] at (1.0, 0) {};
\node(3)[circle, draw, fill=pink][below of = 13]{};
\node(4)[circle, draw, fill=pink][right of = 13]{};
\node(5)[circle, draw, fill=pink][below of = 4]{};

\draw(3)[magenta, very thick]--(4);
\draw(3)[magenta, very thick]--(5);
\draw(3)[magenta, very thick]--(13);
\draw(4)[magenta, very thick]--(5);
\draw(5)[magenta, very thick]--(13);
\draw(13)[magenta, very thick]--(4);
\draw(5)[magenta, very thick]--(13);

\node[draw,align=left] at (1.4,-1.2) {$D_{13}$};

\end{tikzpicture}
\end{center}

Using Gr\"unbaum's result, we note that every 4-colorable planar graph must have at least four 3-cycles.  By the restrictions we placed on the graphs of $\mathbb{F}_4$, we know that for any 4-colorable planar graph $G$, there must be an $G' \subset G$ and $H \in \mathbb{F}_4$ such that either $G' \cong H$, or $G'$ is an edge-minimal subgraph of $G$ with four triangles such that $\mathbb{E}[\chi(G'_p)]\geq \mathbb{E}[\chi H_p)]$ for all $p$ (this would be the case where we can separate at least one vertex in $G'$).  Therefore, we must only consider the graphs in $\mathbb{F}_4$ along with the expected chromatic numbers of their random subgraphs to determine whether $K_4$ is a 4-minimizer.  Note that in the following, we have renamed the graphs of $\mathbb{F}_4$ for convenience:

\begin{center}
\begin{tikzpicture}[node distance = .6cm]
\node(1)[circle, draw, fill=pink] at (-.2,0) {};
\node(2)[circle, draw, fill=pink][below of = 1]{};
\node(3)[circle, draw, fill=pink][right of = 2]{};
\node(13)[right of = 1]{};
\node(4)[circle, draw, fill=pink][right of = 13]{};
\node(5)[circle, draw, fill=pink][below of = 4]{};
\node(6)[circle, draw, fill=pink][right of = 5]{};
\node(14)[right of = 4]{};
\node(7)[circle, draw, fill=pink][right of = 14]{};
\node(8)[circle, draw, fill=pink][below of = 7]{};
\node(9)[circle, draw, fill=pink][right of = 8]{};
\node(15)[right of = 7]{};
\node(10)[circle, draw, fill=pink][right of = 15]{};
\node(11)[circle, draw, fill=pink][below of = 10]{};
\node(12)[circle, draw, fill=pink][right of = 11]{};

\draw(1)[magenta, very thick]--(2);
\draw(2)[magenta, very thick]--(3);
\draw(3)[magenta, very thick]--(1);
\draw(4)[magenta, very thick]--(5);
\draw(5)[magenta, very thick]--(6);
\draw(6)[magenta, very thick]--(4);
\draw(7)[magenta, very thick]--(8);
\draw(8)[magenta, very thick]--(9);
\draw(9)[magenta, very thick]--(7);
\draw(10)[magenta, very thick]--(11);
\draw(11)[magenta, very thick]--(12);
\draw(12)[magenta, very thick]--(10);

\node[draw,align=left] at (1.8,-1.2) {$\mathbb{E}[\chi(G^1_{1/2})] \approx 2.4136$};
\end{tikzpicture}
\hspace{.6cm}
\begin{tikzpicture}[node distance = .6cm]
\node(13){};
\node(3)[circle, draw, fill=pink][below of = 13]{};
\node(4)[circle, draw, fill=pink][right of = 13]{};
\node(5)[circle, draw, fill=pink][below of = 4]{};
\node(6)[circle, draw, fill=pink][right of = 5]{};
\node(14)[right of = 4]{};
\node(7)[circle, draw, fill=pink][right of = 14]{};
\node(8)[circle, draw, fill=pink][below of = 7]{};
\node(9)[circle, draw, fill=pink][right of = 8]{};
\node(15)[right of = 7]{};
\node(10)[circle, draw, fill=pink][right of = 15]{};
\node(11)[circle, draw, fill=pink][below of = 10]{};
\node(12)[circle, draw, fill=pink][right of = 11]{};

\draw(3)[magenta, very thick]--(4);
\draw(3)[magenta, very thick]--(5);
\draw(4)[magenta, very thick]--(5);
\draw(5)[magenta, very thick]--(6);
\draw(6)[magenta, very thick]--(4);
\draw(7)[magenta, very thick]--(8);
\draw(8)[magenta, very thick]--(9);
\draw(9)[magenta, very thick]--(7);
\draw(10)[magenta, very thick]--(11);
\draw(11)[magenta, very thick]--(12);
\draw(12)[magenta, very thick]--(10);

\node[draw,align=left] at (1.8,-1.2) {$\mathbb{E}[\chi(G^2_{1/2})] \approx 2.4014$};

\end{tikzpicture}
\hspace{.6cm}
\begin{tikzpicture}[node distance = .6cm]
\node(13){};
\node(3)[circle, draw, fill=pink][below of = 13]{};
\node(4)[circle, draw, fill=pink][right of = 13]{};
\node(5)[circle, draw, fill=pink][below of = 4]{};
\node(6)[circle, draw, fill=pink][right of = 5]{};
\node(14)[right of = 4]{};
\node(9)[circle, draw, fill=pink][right of = 6]{};
\node(10)[circle, draw, fill=pink][right of = 9]{};
\node(11)[circle, draw, fill=pink][above of = 10]{};
\node(12)[circle, draw, fill=pink][right of = 10]{};

\draw(3)[magenta, very thick]--(4);
\draw(3)[magenta, very thick]--(5);
\draw(4)[magenta, very thick]--(5);
\draw(5)[magenta, very thick]--(6);
\draw(6)[magenta, very thick]--(4);
\draw(9)[magenta, very thick]--(10);
\draw(9)[magenta, very thick]--(11);
\draw(10)[magenta, very thick]--(11);
\draw(11)[magenta, very thick]--(12);
\draw(12)[magenta, very thick]--(10);

\node[draw,align=left] at (1.4,-1.2) {$\mathbb{E}[\chi(G^3_{1/2})] \approx 2.3887$};

\end{tikzpicture}

\vspace*{1cm}

\begin{tikzpicture}[node distance = .6cm]
\node(13){};
\node(3)[circle, draw, fill=pink][below of = 13]{};
\node(4)[circle, draw, fill=pink][right of = 13]{};
\node(5)[circle, draw, fill=pink][below of = 4]{};
\node(6)[circle, draw, fill=pink][right of = 5]{};
\node(9)[circle, draw, fill=pink][above of = 6]{};
\node(10)[circle, draw, fill=pink][right of = 6]{};
\node(11)[circle, draw, fill=pink][above of = 10]{};
\node(12)[circle, draw, fill=pink][right of = 10]{};

\draw(3)[magenta, very thick]--(4);
\draw(3)[magenta, very thick]--(5);
\draw(4)[magenta, very thick]--(5);
\draw(4)[magenta, very thick]--(9);
\draw(5)[magenta, very thick]--(6);
\draw(6)[magenta, very thick]--(4);
\draw(9)[magenta, very thick]--(6);
\draw(10)[magenta, very thick]--(11);
\draw(11)[magenta, very thick]--(12);
\draw(12)[magenta, very thick]--(10);

\node[draw,align=left] at (1.4,-1.2) {$\mathbb{E}[\chi(G^4_{1/2})] \approx 2.3975$};
\end{tikzpicture}
\hspace{.9cm}
\begin{tikzpicture}[node distance = .6cm]
\node(13) at (.3,0) {};
\node(3)[circle, draw, fill=pink][below of = 13]{};
\node(4)[circle, draw, fill=pink][right of = 13]{};
\node(5)[circle, draw, fill=pink][below of = 4]{};
\node(6)[circle, draw, fill=pink][right of = 5]{};
\node(9)[circle, draw, fill=pink][above of = 6]{};
\node(10)[circle, draw, fill=pink][right of = 6]{};
\node(11)[circle, draw, fill=pink][above of = 10]{};
\node(12)[circle, draw, fill=pink][right of = 10]{};

\draw(3)[magenta, very thick]--(4);
\draw(3)[magenta, very thick]--(5);
\draw(4)[magenta, very thick]--(5);
\draw(4)[magenta, very thick]--(9);
\draw(5)[magenta, very thick]--(6);
\draw(6)[magenta, very thick]--(4);
\draw(9)[magenta, very thick]--(5);
\draw(10)[magenta, very thick]--(11);
\draw(11)[magenta, very thick]--(12);
\draw(12)[magenta, very thick]--(10);

\node[draw,align=left] at (1.4,-1.2) {$\mathbb{E}[\chi(G^5_{1/2})] \approx 2.3770$};

\end{tikzpicture}
\hspace{.9cm}
\begin{tikzpicture}[node distance = .6cm]
\node(13) at (.5,0) {};
\node(3)[circle, draw, fill=pink][below of = 13]{};
\node(4)[circle, draw, fill=pink][right of = 13]{};
\node(5)[circle, draw, fill=pink][below of = 4]{};
\node(6)[circle, draw, fill=pink][right of = 5]{};
\node(9)[circle, draw, fill=pink][above of = 6]{};
\node(12)[circle, draw, fill=pink][right of = 6]{};

\draw(3)[magenta, very thick]--(4);
\draw(3)[magenta, very thick]--(5);
\draw(4)[magenta, very thick]--(5);
\draw(4)[magenta, very thick]--(9);
\draw(5)[magenta, very thick]--(6);
\draw(6)[magenta, very thick]--(4);
\draw(9)[magenta, very thick]--(6);
\draw(12)[magenta, very thick]--(6);
\draw(12)[magenta, very thick]--(9);

\node[draw,align=left] at (1.4,-1.2) {$\mathbb{E}[\chi(G^6_{1/2})] \approx 2.3906$};

\end{tikzpicture}

\vspace{.5cm}
\begin{tikzpicture}[node distance = .6cm]
\node(13)[circle, draw, fill=pink] at (-.5, 0) {};
\node(3)[circle, draw, fill=pink][below of = 13]{};
\node(4)[circle, draw, fill=pink][right of = 13]{};
\node(5)[circle, draw, fill=pink][below of = 4]{};
\node(6)[circle, draw, fill=pink][right of = 5]{};
\node(9)[circle, draw, fill=pink][above of = 6]{};

\draw(3)[magenta, very thick]--(4);
\draw(3)[magenta, very thick]--(5);
\draw(4)[magenta, very thick]--(5);
\draw(4)[magenta, very thick]--(9);
\draw(5)[magenta, very thick]--(6);
\draw(6)[magenta, very thick]--(4);
\draw(9)[magenta, very thick]--(5);
\draw(13)[magenta, very thick]--(4);
\draw(3)[magenta, very thick]--(13);

\node[draw,align=left] at (0,-1.2) {$\mathbb{E}[\chi(G^7_{1/2})] \approx 2.3809$};

\end{tikzpicture}
\hspace{.9cm}
\begin{tikzpicture}[node distance = .6cm]
\node(13)[circle, draw, fill=pink] at (.7, 0) {};
\node(3)[circle, draw, fill=pink][below of = 13]{};
\node(4)[circle, draw, fill=pink][right of = 13]{};
\node(5)[circle, draw, fill=pink][below of = 4]{};
\node(6)[circle, draw, fill=pink][right of = 5]{};
\node(9)[circle, draw, fill=pink][above of = 6]{};

\draw(3)[magenta, very thick]--(4);
\draw(3)[magenta, very thick]--(5);
\draw(4)[magenta, very thick]--(5);
\draw(4)[magenta, very thick]--(9);
\draw(5)[magenta, very thick]--(6);
\draw(6)[magenta, very thick]--(4);
\draw(9)[magenta, very thick]--(5);
\draw(13)[magenta, very thick]--(4);
\draw(5)[magenta, very thick]--(13);

\node[draw,align=left] at (1.4,-1.2) {$\mathbb{E}[\chi(G^8_{1/2})] \approx 2.3398$};

\end{tikzpicture}
\hspace{.9cm}
\begin{tikzpicture}[node distance = .6cm]
\node(1)[circle, draw, fill=pink] at (1, .2) {};
\node(2)[circle, draw, fill=pink] at (1.8, .2) {};
\node(5)[circle, draw, fill=pink] at (1.4, -.2) {};
\node(4)[circle, draw, fill=pink] at (1, -.6) {};
\node(3)[circle, draw, fill=pink] at (1.8, -.6) {};

\draw(1)[magenta, very thick]--(2);
\draw(2)[magenta, very thick]--(3);
\draw(3)[magenta, very thick]--(4);
\draw(4)[magenta, very thick]--(1);
\draw(1)[magenta, very thick]--(5);
\draw(2)[magenta, very thick]--(5);
\draw(3)[magenta, very thick]--(5);
\draw(4)[magenta, very thick]--(5);

\node[draw,align=left] at (1.4,-1.2) {$\mathbb{E}[\chi(G^9_{1/2})] \approx 2.3828$};

\end{tikzpicture}

\vspace{.5cm}
\begin{tikzpicture}[node distance = .6cm]
\node(13)[circle, draw, fill=pink] at (.7, 0) {};
\node(3)[circle, draw, fill=pink] at (2, 0) {};
\node(4)[circle, draw, fill=pink] at (1.35, 0) {};
\node(5)[circle, draw, fill=pink] at (1, -.5) {};
\node(6)[circle, draw, fill=pink] at (1.7, -.5) {};
\node(9)[circle, draw, fill=pink] at (1.35, -1) {};

\draw(3)[magenta, very thick]--(4);
\draw(3)[magenta, very thick]--(6);
\draw(4)[magenta, very thick]--(5);
\draw(6)[magenta, very thick]--(9);
\draw(5)[magenta, very thick]--(6);
\draw(6)[magenta, very thick]--(4);
\draw(9)[magenta, very thick]--(5);
\draw(13)[magenta, very thick]--(4);
\draw(5)[magenta, very thick]--(13);

\node[draw,align=left] at (1.4,-1.7) {$\mathbb{E}[\chi(G^{10}_{1/2})] \approx 2.3984$};

\end{tikzpicture}
\hspace{.9cm}
\begin{tikzpicture}[node distance = .6cm]
\node(13)[circle, draw, fill=pink] at (1.0, 0) {};
\node(3)[circle, draw, fill=pink][below of = 13]{};
\node(4)[circle, draw, fill=pink][right of = 13]{};
\node(5)[circle, draw, fill=pink][below of = 4]{};

\draw(3)[magenta, very thick]--(4);
\draw(3)[magenta, very thick]--(5);
\draw(3)[magenta, very thick]--(13);
\draw(4)[magenta, very thick]--(5);
\draw(5)[magenta, very thick]--(13);
\draw(13)[magenta, very thick]--(4);
\draw(5)[magenta, very thick]--(13);

\node[draw,align=left] at (1.4,-1.2) {$\mathbb{E}[\chi((K_4)_{1/2})] \approx 2.3594$};

\end{tikzpicture}
\end{center}

From the expected values of the different configurations, we see that if $G$ is a planar 4-minimizer, then either it must be $K_4$, or it must be a 4-critical supergraph of $G^8$.  Let us consider the graph of $G^8$ a little more carefully (this time drawn in a planar fashion):

\begin{center}
\begin{tikzpicture}[node distance = 2cm]
\node(14){};
\node(13)[circle, draw, fill=pink][below of = 14]{C};
\node(15)[below of = 13]{};
\node(16)[right of = 14]{};
\node(3)[circle, draw, fill=pink][right of = 13]{D};
\node(4)[circle, draw, fill=pink][right of = 16]{A};
\node(17)[below of = 4]{};
\node(5)[circle, draw, fill=pink][below of = 17]{B};
\node(17)[right of = 5]{};
\node(6)[circle, draw, fill=pink][above of = 17]{E};
\node(9)[circle, draw, fill=pink][right of = 6]{F};

\draw(3)[magenta, very thick]--(4);
\draw(3)[magenta, very thick]--(5);
\draw(4)[magenta, very thick]--(5);
\draw(4)[magenta, very thick]--(9);
\draw(5)[magenta, very thick]--(6);
\draw(6)[magenta, very thick]--(4);
\draw(9)[magenta, very thick]--(5);
\draw(13)[magenta, very thick]--(4);
\draw(5)[magenta, very thick]--(13);

\end{tikzpicture}
\end{center}

No matter how we draw $G^8$, we will always have a subgraph structure of a 3-cycle enclosing a fourth point that connects to two of the points of the 3-cycle.  Without loss of generality then, let us suppose that the 3-cycle $C_3$ is composed of points $A$, $B$, and $C$, with interior point $D$:

\begin{center}
\begin{tikzpicture}[node distance = 1cm]
\node(14){};
\node(13)[circle, draw, fill=yellow][below of = 14]{C};
\node(15)[below of = 13]{};
\node(16)[right of = 14]{};
\node(3)[circle, draw, fill=yellow][right of = 13]{D};
\node(4)[circle, draw, fill=yellow][right of = 16]{A};
\node(17)[below of = 4]{};
\node(5)[circle, draw, fill=yellow][below of = 17]{B};
\node(17)[right of = 5]{};
\node(6)[circle, draw, fill=pink][above of = 17]{E};
\node(9)[circle, draw, fill=pink][right of = 6]{F};

\draw(3)[yellow, very thick]--(4);
\draw(3)[yellow, very thick]--(5);
\draw(4)[yellow, very thick]--(5);
\draw(4)[magenta, very thick]--(9);
\draw(5)[magenta, very thick]--(6);
\draw(6)[magenta, very thick]--(4);
\draw(9)[magenta, very thick]--(5);
\draw(13)[yellow, very thick]--(4);
\draw(5)[yellow, very thick]--(13);

\end{tikzpicture}
\end{center}

Now treat $G^8$ as the subgraph of a larger graph $H$.  Consider Int($C_3$) (the subgraph induced by the vertices on and inside $C_3$) and Ext($C_3$) (the subgraph induced by the vertices on and outside $C_3$).  Because the coloring of $C_3$ is independent of the structure of $H$ (up to translation), the coloring of Int($C_3$) is independent of the coloring of Ext($C_3$).  Therefore, if $H$ is 4-colorable, then at least one of Int($C_3$) or Ext($C_3$) must be 4-colorable.  However, because Int($C_3$)$\setminus C_3$ and Ext($C_3$)$\setminus C_3$ are both nonempty, $H$ cannot be critically chromatic.  Therefore, there exists no planar 4-critical chromatic graph containing $G^8$.  Therefore, $K_4$ is the unique planar 4-minimizer for $p=\frac{1}{2}$.

In general, the proof that $K_4$ is the unique planar 4-minimizer for all $p\in(0,\frac{1}{2}]$ follows from the same reasoning.  Namely, we simply find polynomial expressions in $p$ for the expected values of each our different triangle configurations that must appear---each such polynomial is relatively easy to calculate, but they are emphatically awful to look at.  Among these, the polynomial for $K_4$ is the least for all $p \in (0, 1/2]$, which follows from routine computations.
\end{proof}
\end{appendices}
\end{document}